\definecolor{darkmidnightblue}{rgb}{0.0, 0.2, 0.4}
\definecolor{darkpowderblue}{rgb}{0.0, 0.2, 0.6}
\definecolor{dukeblue}{rgb}{0.0, 0.0, 0.61}
\definecolor{myblue}{RGB}{80,80,160}
\definecolor{mygreen}{RGB}{80,160,80}
\DeclareMathOperator*{\argmin}{arg\,min}
\newcommand*\bigcdot{\mathpalette\bigcdot@{.5}}
\def\hat{\widehat}
\def\z{\zeta}
\def\t{\theta}
\def\k{\kappa}
\def\s{\sigma}
\def\O{\Omega}
\def\ie{\textit{i.e., }}
\def\support{\mathbf{supp}}
\def\prob{\mathbf P}
\def\btheta{\boldsymbol{\theta}}
\def\bX{\boldsymbol{X}}
\def\bXdiese{\boldsymbol{X}^{\text{\tt\#}}}
\def\bfXdiese{{\mathbf X}^{\text{\tt\#}}}
\def\bthetadiese{\boldsymbol{\theta}^\texttt{\#}}
\def\Xdiese{X^{\text{\tt\#}}}
\def\sdiese{\s^{\text{\tt\#}}}
\def\xidiese{\xi^\text{\tt\#}}
\def\tdiese{\t^\text{\tt\#}}
\def\btdiese{\boldsymbol{\theta}^\text{\tt\#}}
\def\tdiese{{\theta}^\text{\tt\#}}
\def\sigmadiese{{\sigma}^\text{\tt\#}}
\def\kall{\bar\k_{\textup{all}}}
\newtheorem{theorem}{Theorem}
\begin{document}

%

\runningtitle{Matching Map Recovery with an Unknown Number of Outliers}

%
\runningauthor{Minasyan, Galstyan, Hunanyan, Dalalyan}

\twocolumn[

\aistatstitle{Matching Map Recovery  
with an Unknown Number of Outliers}

\aistatsauthor{Arshak Minasyan* \And Tigran Galstyan \par \And  Sona Hunanyan \And Arnak Dalalyan }

\aistatsaddress{CREST, ENSAE, IP Paris \\ 5 av. Henry Le Chatelier \\ 91764 Palaiseau 
\\ \href{mailto:arshak.minasyan@ensae.fr}{arshak.minasyan@ensae.fr} 
\And  RAU, YerevaNN\\
  20 Charents street \\ 0025 Yerevan 
  \\ \href{mailto:tigran@yerevann.com}{tigran@yerevann.com}
   \And Yerevan State University\\
  1 Alek Manukyan \\ 0025 Yerevan 
  \\ \href{mailto:hunan.sona@gmail.com}{hunan.sona@gmail.com} 
  \And CREST, ENSAE, IP Paris \\ 5 av. Henry Le Chatelier \\ 91764 Palaiseau 
  \\ \href{mailto:arnak.dalalyan@ensae.fr}{arnak.dalalyan@ensae.fr}
  } 

]

\begin{abstract}
  We consider the problem of finding the matching map between two sets of $d$-dimensional noisy feature-vectors. The distinctive feature of our setting is that we do not assume that all the vectors of the first set have their corresponding vector in the second set. If $n$ and $m$ are the sizes of these two sets, we assume that the matching map that should be recovered is defined on a subset of unknown cardinality $k^*\le \min(n,m)$. We show that, in the high-dimensional setting, if the signal-to-noise ratio is larger than $5(d\log(4nm/\alpha))^{1/4}$, then the true matching map can be recovered with probability $1-\alpha$. Interestingly, this threshold does not depend on $k^*$ and is the same as the one obtained in prior work in the case of $k = \min(n,m)$. The procedure for which the aforementioned property is proved is obtained by a data-driven selection among candidate mappings $\{\hat\pi_k:k\in[\min(n,m)]\}$. Each $\hat\pi_k$ minimizes the sum of squares of distances between two sets of size $k$. The resulting optimization problem can be formulated as a minimum-cost flow problem, and thus solved efficiently. Finally, we report the results of numerical experiments on both synthetic and real-world data that illustrate our theoretical results and provide further insight into the properties of the algorithms studied in this work.
\end{abstract}

\section{INTRODUCTION}\label{sec:intro}
The problem of finding the best matching between two point clouds has been extensively studied, both theoretically and experimentally. The matching problem arises in various applications, for instance in computer vision and natural language processing. In computer vision, finding the correspondence between two sets of local descriptors extracted from two images of the same scene is a well-known example of a matching problem. In natural language processing, in particular, in machine translation, the correspondence between vector representations of the same text in two different languages is another example of a matching problem. Clearly, in these problems, not all the points have their matching point and one can hardly know in advance how many points have their corresponding matching points. The goal of the present work is to focus on this setting and to gain a theoretical understanding of the statistical limitations of the matching problem. 

To formulate the problem and to state the main result, 
let $\mathbf{X} = (X_1, \dots, X_n)$ and $\bfXdiese 
= (\Xdiese_1, \dots, \Xdiese_m)$ be two sequences of feature vectors of sizes $n$ and  $m$ such that 
$m \ge n \ge 2$. We assume that these sequences 
are noisy versions of some feature-vectors, \textit{i.e.},
\begin{equation}\label{model}
\begin{cases}
X_i = \t_i + \s\xi_i \ , \\
\Xdiese_j = \tdiese_j + \sdiese\xidiese_j,
\end{cases}\quad i\in[n] \text{ and } 
j\in[m],
\end{equation}
where $\btheta = (\theta_1, \dots, \theta_n)$ and $\bthetadiese = (\tdiese_1, \dots, \tdiese_m)$ are two sequences of deterministic vectors from $\mathbb{R}^d$, corresponding to the original feature-vectors.  The noise components of $\mathbf X$ and $\bfXdiese$ are two independent sequences of i.i.d.\ isotropic Gaussian random vectors. Formally,
\begin{align}
    \xi_1, \dots, \xi_n, \xidiese_1, \dots, \xidiese_m \stackrel{\text{i.i.d.}}{\sim} \mathcal{N}(0, \mathbf I_d),
\end{align}
where $\mathbf I_d$ is the identity matrix of size 
$d \times d$. We assume that for some $S^* \subset [n]$ of cardinality $k^*$, there exists an injective mapping 
$\pi^* : S^* \to [m]$ such that $\theta_i = 
\tdiese_{\pi^*(i)}$ holds for all $i \in S^*$. We call 
the observations  $(\bX_i:i\in S^*)$ and 
$(\bXdiese_{\pi^*(i)}:i\in S^*)$ \textit{inliers}, while
the other vectors from the sequences $\bX$ and $\bXdiese$  are considered 
to be \textit{outliers}. The ultimate goal is to recover
$\pi^*$ based on the observations $\mathbf X$ and 
$\bfXdiese$ only.

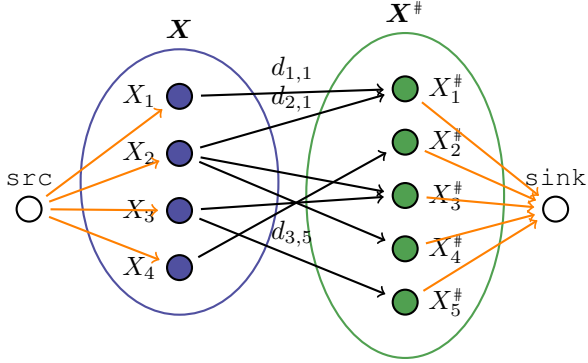
\begin{figure}[t] 
\centering
\raisebox{0.3mm}{
\begin{minipage}[b]{.45\textwidth}
\begin{tikzpicture}[thick,
  fsnode/.style={draw, circle, fill=myblue},
  ssnode/.style={draw, circle, fill=mygreen},
  srcnode/.style={draw, circle},
  every fit/.style={ellipse,draw,inner sep=-2pt,text width=2cm},
  ->,shorten >= 3pt,shorten <= 3pt
  ]

\begin{scope}[start chain=going below,node distance=4mm]
\foreach \i in {1,2,...,4}
  \node[fsnode,on chain] (f\i) [label=left: $X_\i$] {};
\end{scope}

\begin{scope}[xshift=3cm,yshift=0.1cm,start chain=going below,node distance=3.5mm]
\foreach \i in {1, 2, ..., 5}
  \node[ssnode,on chain] (s\i) [label=right: $\Xdiese_{\i}$] {};
\end{scope}

\begin{scope}[xshift=-2cm, yshift=-1.5cm, start chain=going below, node distance=7mm]
\foreach \i in {1}
    \node[srcnode, on chain] (src\i) [label=above:$\texttt{src}$] {};
\end{scope}

\begin{scope}[xshift=5cm, yshift=-1.5cm, start chain=going below, node distance=7mm]
\node[srcnode, on chain] (sink) [label=above:$\texttt{sink}$] {};
\end{scope}

\node [myblue,fit=(f1) (f4),label=above:$\bX$] {};
\node [mygreen,fit=(s1) (s5),label=above:$\bXdiese$] {};

\draw (f1) -- node[above] {$d_{1,1}$} (s1);
\draw (f2) -- (s3);
\draw (f2) -- node[above] {$d_{2,1}$} (s1);
\draw (f2) -- (s4);
\draw (f3) -- (s3);
\draw (f3) -- node[above] {$d_{3,5}$} (s5);
\draw (f4) -- (s2);

\draw[color=orange] (src1) -- (f1);
\draw[color=orange] (src1) -- (f2);
\draw[color=orange] (src1) -- (f3);
\draw[color=orange] (src1) -- (f4);

\draw[color=orange] (s1) -- (sink); 
\draw[color=orange] (s2) -- (sink); 
\draw[color=orange] (s3) -- (sink); 
\draw[color=orange] (s4) -- (sink); 
\draw[color=orange] (s5) -- (sink); 
\end{tikzpicture}
\end{minipage}
}
\caption{Matching as a Minimum 
Cost Flow (MCF) problem. 
The idea is to augment the graph with two nodes,  {\it{source}} and {\it{sink}}, and $n+m$ edges. The capacities of orange edges 
should be set to $1$, while the cost should be set to 
$0$. Setting the total flow sent through the graph 
to $k$, the solution of the MCF becomes a matching 
of size $k$.}
\label{fig:1}
\end{figure}
\raggedbottom
Various versions of this problem have been studied in 
the literature. \cite{jmlr_CD13, collier2016minimax} considered the outlier-free case with equal sizes of
sequences $\bX$ and $\bXdiese$ (\textit{i.e.}, $m = n$ 
and $S^* = [n]$), whereas \cite{galstyan2021optimal}
investigated the case with outliers in one of the
sequences only (\textit{i.e.}, $m \ge n$ and $S^* = [n]$). 
Other variations of the matching problem under 
Hamming loss have been studied by \cite{wang2022random, chen2022one, kunisky2022strong}. These papers obtain 
minimax-optimal separation rates and, in most cases, 
despite the discrete nature of the matching problem, provide computationally tractable procedures to achieve these rates.

When $S^*$ is an arbitrary subset of $[n]$, which is
the setting we focus on in this work, one can wonder whether
the minimax separation rate is the same as in the case 
of known $S^*$. Since the absence of knowledge on $S^*$ 
brings additional combinatorial complexity to the problem, 
one can also wonder whether it is still possible to conciliate statistical optimality and computational tractability. We show in this work that the answers to
these questions are affirmative. 

To explain our result, let us introduce the quantity
\begin{align}
    \kappa_{i,j} = {\| \theta_i - \tdiese_j\|_2}/{ 
    (\sigma^2 +{\sigmadiese}{}^2)^{1/2}},
\end{align}
which is the signal-to-noise ratio of the difference
$X_i-\Xdiese_j$ of a pair of feature-vectors. Clearly, 
for matching pairs this difference vanishes. Furthermore, if $\kappa_{i,j}$ vanishes or is very small
for a non-matching pair, then there is an identifiability issue and consistent recovery of
underlying true matching is impossible. Therefore, a natural condition
for making consistent recovery possible is to assume
that the quantity
\begin{align}
    \kall \triangleq 
    \min_{i\in [n]}\min_{j\in[m]\setminus 
    \{\pi^*(i)\}}
    \kappa_{i,j}
\end{align}
is bounded away from zero. A recovery procedure $\hat\pi$ is considered to be good, if the threshold $\lambda$ such that $\hat\pi$ recovers $\pi^*$ with 
high probability as soon as $\kall\ge \lambda$ is
as small as possible. It was proved in \citep{collier2016minimax} that when $k^*=n=m$, one 
can recover $\pi^*$ with probability $1-\alpha$ for
$\lambda = 4 \big\{\big(d \log (\nicefrac{4n^2}{\alpha})\big)^{1/4} \vee \big(8\log(\nicefrac{4n^2}{\alpha})\big)^{1/2}
\big\}$. Furthermore, it was proved that this threshold
is minimax optimal, \ie optimal in the family of all 
possible recovery procedures. This implies that 
there are two regimes. In the low
dimensional regime $d\lesssim \log n$, the separation rate is dimension independent.  In contrast with this, the separation rate scales roughly as $d^{1/4}$ in the
(moderately) high dimensional regime $d\gtrsim \log n$.

Let us set
\begin{align}\label{kall:rate}
    \lambda_{n,m,d,\alpha} = 
    4 \big\{\big(d \log ({\textstyle\frac{4nm}{\alpha}})\big)^{\nicefrac14}
    \vee \big(8\log({\textstyle\frac{4nm}{\alpha}})\big)^{
    \nicefrac12}\big\}.
\end{align}
The main contributions of this work are the following.
\begin{itemize}
\itemsep0em
    \item For any given $k\in[\min(n,m)]$, 
    we show that the $k$ Least Sum of Squares ($k$-LSS) 
    procedure, based on
    maximizing profile likelihood among matching maps
    between two sets of size $k$, can be efficiently
    computed using the minimum cost flow problem. We denote the matching obtained using $k$-LSS by $\hat\pi^{\textup{LSS}}_k$. 
    
    \item If the value $k$ turns out to be smaller 
    than $k^*$ and $\kall\ge \lambda_{n,m,d,\alpha}$, 
    we prove that $\hat\pi^{\textup{LSS}}_k$ makes no mistake with probability $1-\alpha$. 
    
    \item  We design a data-driven model selection 
    algorithm that adaptively chooses $\hat k$ such
    that with probability $1-\alpha$, we have 
    $\hat{k} = k^*$ and $\hat\pi^{\textup{LSS}}_{
    \hat{k}} = \pi^*$ as soon as $\kall\ge (5/4)\lambda_{n,m,d,\alpha}$.
\end{itemize}
The last item above implies that our data-driven algorithm
$\hat\pi^{\textup{LSS}}_{\hat{k}}$ achieves the 
minimax separation rate. More surprisingly, this
shows that there is no gap in statistical complexities
between the problems of recovering matching maps
in outlier-free and outliers-present-on-both-sides 
settings.

\section{RELATED WORK}
In statistical hypothesis testing, the separation rates became key objects for measuring the quality of statistical procedures, see the seminal papers \citep{Burnashev, Ingster82} as well as the monographs \citep{Ingster,JudNem}. Currently, this approach is widely adopted in machine learning literature \citep{Xing20,Wolfer,Balchard, RamdasISW16,Wei,Collier12a}. Beyond the classical setting of two hypotheses, it can also be applied to multiple testing frameworks, for instance, variable selection \citep{Ndaoud,azais2020multiple,ComDal1,ComDal2} or the matching problem considered here.

In computer vision, feature matching is a well-studied problem. One of the main directions is to accelerate matching algorithms, based on fast approximate methods (see e.g.
\cite{malkov2020,wang2018, harwood2016, jiang2016}). Another direction is to improve the matching quality by considering alternative local descriptors \citep{orb2011, piifd2010, brief2010} for given keypoints. The choice of keypoints is considered in \cite{Tian_2020_ACCV, bai2020d3feat}. 

The minimum cost flow problem was first studied in the context of the Hungarian algorithm \citep{kuhn_hungarian} and the assignment problem, which is a special case of minimum cost flow on bipartite graphs with all edges having unit capacity. Generalization of Hungarian algorithm for graphs with arbitrary edge costs guarantees $\mathcal{O}((n + F) m)$ time complexity, where $n$ is the number of nodes in the graph, $m$ is the number of edges and $F$ is the total flow sent through the graph. There have also been other algorithms with similar complexity guarantees \citep{fulkerson61, ahuja92}. Since then many algorithms have been proposed for solving minimum cost flow problems in strongly polynomial time \citep{OPT93, orlin93, orlinO96, tarjan, galiltardos} with the fastest runtime of around $\mathcal{O}(n m)$. Recent advances for solving MCF problems have been proposed in \cite{goldberg15} and \cite{chen2022maximum}. The latter proposes an algorithm with an almost-linear computational time. 

Permutation estimation and related problems have been recently investigated in different contexts such as statistical seriation \citep{flammarion2019optimal, giraud2021localization,cai2022matrix}, noisy sorting \citep{mao2018minimax}, regression with shuffled data \citep{pananjady2017linear,slawski2019linear}, isotonic regression and matrices \citep{Mao18,pananjady2020isotonic,ma2020optimal}, crowd labeling \citep{ShahBW16a}, recovery of general discrete structure \citep{gao2019iterative}, and multitarget tracking \citep{chertkov2010, kunisky2022strong}.

\section{MAIN THEORETICAL RESULT}
\label{sec:theoreticalResults}

This section contains the main theoretical contribution
of the present work. In order to be able to recover
$S^*$ and the matching map $\pi^*$, the key ingredient
we use is the maximization of the profile likelihood.
This corresponds to looking for the least sum of 
squares (LSS) of errors over all injective mappings
defined on a subset of $[n]$ of size $k$. Formally, 
if we define 
\begin{align}
    \mathcal{P}_k :=  \bigg\{ \pi:S \to [m] \, 
    \text{such that } \ 
    \begin{matrix}
    		S\subset [n], |S| = k, \\ 
    		\pi \text{ is injective}
	\end{matrix}    
    \bigg\}
\end{align}
to be the set of all $k$-matching maps, we can 
define the procedure $k$-LSS as a solution to the optimization problem
\begin{align}\label{eq:lss-k}
    \widehat{\pi}_{k}^{\textup{LSS}} \in \argmin_{\pi \in \mathcal{P}_k} \sum_{i \in S_\pi} \| X_i - \Xdiese_{\pi(i)} \|^2_2,
\end{align}
where $S_\pi$ denotes the support of function $\pi$. 
In the particular case of $k^*=n$, the optimization above
is conducted over all the injective mappings from $[n]$ to $[m]$. This coincides with the LSS method from \citep{galstyan2021optimal}.

Let  $\hat \Phi(k)$ be the error of $\widehat{\pi}_{k}^{\textup{LSS}}$, that is
\begin{align}\label{hat-Phik}
    \hat \Phi(k) = 
     \min_{\pi \in \mathcal{P}_k} 
     \sum_{i \in S_{\pi}} \nolimits
     {\| X_i - \Xdiese_{\pi(i)} \|^2_2}.
\end{align}
For some values of tuning parameters $\lambda>0$ and 
$\gamma > 0$, as well as for some  $k_{\min}\in [n]$, 
initialize $k\leftarrow k_{\min}$ and
\begin{enumerate}
\itemsep0em
    \item Compute $\hat\Phi(k)$ and $\hat\Phi(k+1)$.
    \item Set $\bar{\sigma}_{k}^2 = 
     \hat{\Phi}(k)/(kd)$. 
    \item If $k=n$ or $\hat{\Phi}(k+1) - \hat{\Phi}(k) 
    > \frac{d + \lambda}{1-\gamma} \bar{\sigma}_{k}^2$,
    \item[] then output $(k,\bar\sigma_k,\hat{\pi}_k^{\textup{LSS}})$. 
    \item Otherwise, increase $k \leftarrow k + 1$ and go to 
    Step 1. 
\end{enumerate}\label{joint_est_algo}
In the sequel, we denote by $(\hat k,\bar\sigma_{\hat k}, \hat{\pi}_{\hat k}^{\textup{LSS}})$ the output 
of this procedure. Notice that we start with the value of $k=k_{\min}$, which in the absence of any information on the number of inliers might be set to $k=1$. 
However, using a higher value of $k_{\min}$ might considerably speed up the procedure and improve its quality.

For appropriately chosen values of $\gamma$ and $\lambda$, as stated in the next theorem, the 
described procedure outputs the correct values 
of $k^*$ and $\pi^*$ with high probability.

\begin{theorem}\label{thm:main}
Let $\alpha \in (0, 1)$  and 
$\lambda_{n,m,d,\alpha}$ be defined by \eqref{kall:rate}. 
If $\kall > (\nicefrac{5}{4}) \, \lambda_{n,m, d,
\alpha}$, then the output $(\hat{k}, \hat{\pi}_{\hat{k}}^{ 
\textup{LSS}})$ of the model selection algorithm with 
parameters $\lambda = (\nicefrac14) \lambda_{ 
n,m,d,\alpha}^2, \gamma = \nicefrac{\lambda}{d}$ satisfies $\prob(\hat{\pi}_{\hat{k}}^{ 
\textup{LSS}} = \pi^*) 
\ge 1 - \alpha$. 
\end{theorem}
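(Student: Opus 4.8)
The plan is to condition on a single high-probability event $\Omega_0$ and then argue deterministically. Writing $\tau^2=\sigma^2+\sigmadiese{}^2$, I would first record that a correct pair satisfies $X_i-\Xdiese_{\pi^*(i)}=\sigma\xi_i-\sigmadiese\xidiese_{\pi^*(i)}\sim\mathcal N(0,\tau^2\mathbf I_d)$, so $\|X_i-\Xdiese_{\pi^*(i)}\|_2^2/\tau^2$ is a $\chi^2_d$ variable, while a wrong pair $(i,j)$ (any $j\neq\pi^*(i)$, with the convention that for $i\notin S^*$ every $j$ is wrong) expands as $\|\theta_i-\tdiese_j\|_2^2+2\langle\theta_i-\tdiese_j,\sigma\xi_i-\sigmadiese\xidiese_j\rangle+\|\sigma\xi_i-\sigmadiese\xidiese_j\|_2^2$, where $\|\theta_i-\tdiese_j\|_2^2=\tau^2\kappa_{i,j}^2\ge\tau^2\kall^2$. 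I would define $\Omega_0$ through Laurent--Massart bounds on the $\chi^2_d$ terms and Gaussian tail bounds on the cross term (whose standard deviation is $\tau^2\kappa_{i,j}$), taking a union bound over the $\le n$ correct pairs and $\le nm$ wrong pairs. This is exactly what produces the quantity $\log(4nm/\alpha)$ and the dimension factor $d$ inside $\lambda_{n,m,d,\alpha}$, and on $\Omega_0$ (which has probability $\ge1-\alpha$) every correct cost lies in $\tau^2(d\pm a)$ and every wrong cost is at least $\tau^2\big(d+\kappa_{i,j}^2-2\kappa_{i,j}\sqrt{2\log(4nm/\alpha)}-a\big)$ for a deviation term $a\asymp\sqrt{d\log(4nm/\alpha)}+\log(4nm/\alpha)$.

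The key structural lemma is that, on $\Omega_0$, for every $k\le k^*$ the minimizer $\hat\pi_k^{\textup{LSS}}$ uses only correct edges. The clean way to see this is to exploit that \emph{any} set of correct edges $\{(i,\pi^*(i)):i\in T\}$ is automatically a valid matching, since $\pi^*$ is injective; hence the competitor $C_k^*$ formed by the $k$ cheapest correct edges is feasible and $\hat\Phi(k)\le C_k^*$. If $\hat\pi_k^{\textup{LSS}}$ had $w\ge1$ wrong edges, I would compare it to $C_k^*$ block-by-block: its $k-w$ correct edges cost at least the cheapest $k-w$ correct edges, while its $w$ wrong edges cost at least $w$ times the wrong-pair lower bound. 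Completing the square in $\kappa_{i,j}^2-2\kappa_{i,j}\sqrt{2\log(4nm/\alpha)}$ to get a bound of order $\tau^2\kall^2$, this yields a cost gap of at least $w\tau^2(\kall^2-2a)>0$ whenever $\kall\ge\lambda_{n,m,d,\alpha}$, contradicting optimality. Thus $w=0$; in particular at $k=k^*$ the map $\hat\pi_{k^*}^{\textup{LSS}}$ uses all $k^*$ correct edges, i.e.\ equals $\pi^*$.

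It then remains to show the stopping rule fires exactly at $k^*$ (assuming $k_{\min}\le k^*$, which holds for the default $k_{\min}=1$). Given the lemma, for $k<k^*$ both $\hat\Phi(k)$ and $\hat\Phi(k+1)$ are sums of cheapest correct costs, so $\hat\Phi(k+1)-\hat\Phi(k)$ is an order statistic bounded by $\tau^2(d+a)$, while $\bar\sigma_k^2=\hat\Phi(k)/(kd)\ge\tau^2(1-a/d)$; with $\gamma=\lambda/d$ the threshold $\tfrac{d+\lambda}{1-\gamma}\bar\sigma_k^2$ behaves like $\tau^2(d+2\lambda-a)$, so the extra $\lambda$ supplied by the factor $1/(1-\gamma)$ keeps the increment below threshold and the algorithm does not stop early. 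For $k=k^*$, removing any wrong edge from $\hat\pi_{k^*+1}^{\textup{LSS}}$ leaves a feasible $k^*$-matching, whence $\hat\Phi(k^*+1)-\hat\Phi(k^*)$ is at least one wrong-edge cost $\ge\tau^2(d+\kall^2-a)$; since $\kall^2>(25/16)\lambda_{n,m,d,\alpha}^2=(25/4)\lambda$ this exceeds the threshold with room to spare, so the rule stops and outputs $\hat k=k^*$ and $\hat\pi_{\hat k}^{\textup{LSS}}=\pi^*$ (the case $k^*=n$ being covered by the explicit $k=n$ clause).

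The main obstacle is not the logical skeleton but the constant bookkeeping. The condition threshold $\lambda_{n,m,d,\alpha}$ carries the specific constant $4$ and the maximum of a $(d\log)^{1/4}$ term and a $(\log)^{1/2}$ term, and one must verify that $\kall^2$ dominates \emph{simultaneously} the $\chi^2$ fluctuations (scales $\sqrt{d\log}$ and $\log$) and the cross-term fluctuation (scale $\kall\sqrt{\log}$, which is the delicate one since it grows with the very signal it must be beaten by). The two-regime $\vee$ structure is precisely calibrated for this, and the gap between the bare recovery threshold $\lambda_{n,m,d,\alpha}$ and the model-selection threshold $(5/4)\lambda_{n,m,d,\alpha}$, together with the tuning $\lambda=\tfrac14\lambda_{n,m,d,\alpha}^2$ and $\gamma=\lambda/d$, is exactly the margin that makes both inequalities in the increment analysis hold on the same event.
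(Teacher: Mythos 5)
Your proposal follows essentially the same route as the paper: the same high-probability event controlling the cross terms and the $\chi^2_d$ fluctuations via a union bound over all pairs, the same structural fact that $\hat\pi_k^{\textup{LSS}}$ contains no wrong edges for $k\le k^*$ (your comparison with the $k$ cheapest correct edges is just a reorganization of the paper's Lemma 1, and your ``remove a wrong edge'' bound at $k^*+1$ matches its Lemma 5), and the same increment-versus-inflated-threshold analysis exploiting $\gamma=\lambda/d$. The only caveat, which you already flag as ``constant bookkeeping,'' is that the paper itself verifies the final inequalities only in the high-dimensional regime $d\gtrsim\log(n/\sqrt{\alpha})$, so your sketch is on equal footing with the published argument.
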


Since the condition on the separation distance $\kall$ compared to the case of known $k^*$ is different by only a slightly larger constant, from the perspective of statistical accuracy, the case of unknown $k^*$ 
is not more challenging than that of the known $k^*$. 

In the sequel, without much loss of generality, we 
assume that the sizes of $\bX$ and $\bXdiese$ are equal, \ie{$n=m$}. Indeed, in the case, $m> n$ one can add $m-n$ points arbitrarily far from the rest of the points to the smaller set $\bX$ obtaining equal size sets ${\bX}^+$ and $\bXdiese$.
 
Notice that in the optimization problem \eqref{eq:lss-k} the domain of $\pi$ is a finite set of injective functions. For a given value of $k$, the number of such functions is $ k! \binom{n}{k}^2$ making thus an exhaustive search computationally infeasible. Instead, we show in \Cref{Sec:num} that the optimization problem formulated in \eqref{eq:lss-k} can indeed be solved efficiently with complexity $\widetilde{\mathcal{O}}(\sqrt{k}\,n^2)$, where the notation $\widetilde{\mathcal{O}}$ hides polylogarithmic factors, \ie up to polylogarithmic factors, the computational cost is of order $\sqrt{k} n^2$.

\section{INTERMEDIATE RESULTS AND PROOF OF THEOREM 
\ref{thm:main}}
\label{sec:main_proof}
This section is devoted to the proof of our main result. Along the way, we establish some intermediate results
which are of interest on their own. The proofs of
some technical lemmas are deferred to \Cref{app-A}.

\subsection{Sub-mapping Recovery by LSS for \texorpdfstring{$k \le k^*$}{}}

The first question we address in this section is under 
which conditions the LSS estimator $\widehat{\pi}_{k}^{\textup{LSS}}$ from \eqref{eq:lss-k} recovers correct matches. Of course, the only way of correctly estimating the true matching is to choose $k = k^*$. However, it turns out that even if we overestimate the number of outliers and choose a value $k$ which is smaller than the true value $k^*$, with high probability the $k$-LSS estimator makes no wrong matches. Naturally, this result, stated in the next theorem, is valid under the condition that the relative signal-to-noise ratio of all incorrect pairs of original features is larger than some threshold. 

\begin{theorem}[Quality of $k$-LSS when $k\le k^*$]
\label{theorem_2}
    Let $\widehat{S} = \support(\widehat\pi)$
    for $\widehat\pi = {\widehat\pi}_{k}^{\textup{LSS}}$
    defined by \eqref{eq:lss-k}, 
    $\alpha \in (0,1)$ and 
    \begin{align}\label{lambda}
        \lambda_{n,d,\alpha} = 4 \Big(\big(d \log (\nicefrac{4n^2}{\alpha})\big)^{1/4} \vee \big(8\log(\nicefrac{4n^2}{\alpha})\big)^{1/2}
    \Big).
    \end{align}
    If $k\le k^*$ and the signal-to-noise 
    ratio satisfies the condition $\kall \ge 
    \lambda_{n,d,\alpha}$ then, with probability at least $1-\alpha$, 
    the support of the estimator $\widehat{\pi}$ is included in $S^*$ and $\widehat{\pi}$ coincides with $\pi^*$ on the set $\widehat{S}$. Formally,
    \begin{align}
        \mathbf{P}\big( 
        \widehat{S} \subset S^* \text{ and }  \widehat\pi(i) = \pi^*(i), 
        \forall i \in \widehat{S} \, \big) \ge 1- \alpha.
    \end{align}
\end{theorem}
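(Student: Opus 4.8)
My plan is to reduce the whole statement to a single deterministic, high-probability event together with a one-line exchange argument, thereby bypassing any alternating-path combinatorics. Call a pair $(i,\widehat\pi(i))$ used by $\widehat\pi$ \emph{correct} if $i\in S^*$ and $\widehat\pi(i)=\pi^*(i)$, and \emph{wrong} otherwise; the conclusion $\widehat S\subset S^*$ with $\widehat\pi\equiv\pi^*$ on $\widehat S$ is precisely the assertion that $\widehat\pi$ uses no wrong pair. The hypothesis $k\le k^*$ enters through the following completion property: if $\widehat\pi$ uses $c$ correct pairs, supported on $C\subset S^*$, and $w=k-c$ wrong ones, then $|S^*\setminus C|=k^*-c\ge k-c=w$, so I may pick any $C'\subset S^*\setminus C$ with $|C'|=w$ and define the competitor $\tilde\pi:=\pi^*|_{C\cup C'}\in\mathcal P_k$. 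This $\tilde\pi$ is a legitimate $k$-matching (injectivity is inherited from $\pi^*$ together with $C\cap C'=\varnothing$), it uses only correct pairs, and it agrees with $\widehat\pi$ on $C$.

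Next I would introduce the event
\[ \mathcal E:=\Big\{\|X_{i'}-\Xdiese_{\pi^*(i')}\|_2^2<\|X_i-\Xdiese_j\|_2^2 \text{ for all correct } (i',\pi^*(i')) \text{ and all wrong } (i,j)\Big\}, \]
\ie every matching pair is cheaper than every non-matching pair. On $\mathcal E$ the exchange closes at once: in the difference of the two LSS objectives the $c$ shared correct pairs on $C$ cancel, leaving $\sum_{i\in C'}\|X_i-\Xdiese_{\pi^*(i)}\|_2^2-\sum_{\text{wrong }(i,j)}\|X_i-\Xdiese_j\|_2^2$, which is strictly negative because each of the $w$ correct terms is smaller than each of the $w$ wrong terms. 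Hence $\tilde\pi$ beats $\widehat\pi$, contradicting optimality unless $w=0$. Thus on $\mathcal E$ the estimator makes no wrong match, and it only remains to prove $\mathbf P(\mathcal E)\ge1-\alpha$.

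For the probabilistic step I would expand each cost. Since $\theta_{i'}=\tdiese_{\pi^*(i')}$ for a correct pair, its cost equals $\|\sigma\xi_{i'}-\sigmadiese\xidiese_{\pi^*(i')}\|_2^2$, a scaled $\chi^2_d$ with scale $\tau^2:=\sigma^2+{\sigmadiese}{}^2$; for a wrong pair, with $\Delta_{ij}:=\theta_i-\tdiese_j$ (so $\|\Delta_{ij}\|_2=\tau\kappa_{ij}\ge\tau\kall$), the cost is $\|\Delta_{ij}\|_2^2+2\langle\Delta_{ij},\sigma\xi_i-\sigmadiese\xidiese_j\rangle+\|\sigma\xi_i-\sigmadiese\xidiese_j\|_2^2$. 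The crucial point — and the reason the threshold scales like $d^{1/4}$ rather than $d^{1/2}$ — is that the two $\chi^2_d$ terms both concentrate around $\tau^2 d$ and must be controlled as a \emph{difference}: I would bound $\max_{i'}$ of the correct-pair norm from above and $\min_{i,j}$ of the wrong-pair $\chi^2_d$ term from below via Laurent--Massart, so that their difference is only $O(\tau^2\sqrt{dL})$ with $L=\log(\nicefrac{4n^2}{\alpha})$, and bound the Gaussian cross term below by $-2\tau^2\kappa_{ij}\sqrt{2L}$; the factor $L$ inside the logarithm comes from a union bound over the $\le n$ correct and $\le n^2$ wrong pairs.

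After normalising by $\tau^2$, $\mathcal E$ reduces to the scalar inequality $\kappa^2-2\sqrt{2L}\,\kappa-4\sqrt{dL}-2L>0$ holding for every $\kappa\ge\kall$. This is exactly where the calibration of $\lambda_{n,d,\alpha}=4\big((dL)^{1/4}\vee(8L)^{1/2}\big)$ does its work: the branch $\kall\ge 8\sqrt{2L}$ forces $2\sqrt{2L}\,\kappa\le\tfrac14\kappa^2$, while $\kall\ge 4(dL)^{1/4}$ gives $\kappa^2\ge 16\sqrt{dL}$ and $\kappa^2\ge 128L$, which together dominate $4\sqrt{dL}+2L$ with room to spare. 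I expect the main obstacle to be precisely this final bookkeeping — keeping the chi-square deviations as a difference so as not to lose a factor $\sqrt d$, and matching the generous constants in $\lambda_{n,d,\alpha}$ to the Laurent--Massart and Gaussian tail constants under the union bound — rather than the (now trivial) combinatorics.
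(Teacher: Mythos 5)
Your proposal is correct and follows essentially the same route as the paper: you compare $\widehat\pi$ with the restriction of $\pi^*$ to the correctly-matched indices completed (using $k\le k^*$) by arbitrary unused inliers, and you control the comparison through the maximal Gaussian cross term and the maximal deviation of the $\chi^2_d$ norms from $d$, exactly the quantities $\zeta_1$ and $\zeta_2$ of the paper's Lemmas~\ref{lem:1} and~\ref{lem:2}, ending at the same scalar inequality $\kappa^2-2\kappa\zeta_1-2\sqrt{d}\,\zeta_2>0$ and the same union bound over $O(n^2)$ pairs. The only (cosmetic) difference is that you work with the pointwise event that every correct pair is cheaper than every wrong pair and close with a term-by-term exchange, whereas the paper bounds the aggregate difference $L(\pi)-L(\pi_0)$, a slightly weaker statement that it also reuses quantitatively in the model-selection step.
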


\begin{proof}[Proof of \Cref{theorem_2}]
Note that the random vectors 
\begin{align}
    \eta_{ij} = 
    {(\sigma\xi_i - \sigmadiese\xidiese_j)}/{\sqrt{\sigma^2+{\sigmadiese}{}^2}}
\end{align}
are standard Gaussian and define the following quantities 
\begin{align}
\label{def:zeta1zeta2}
\begin{gathered}
    \zeta_1 \triangleq \max_{i, j \neq \pi^*(i)} \frac{|(\theta_i - \tdiese_j)^\top\eta_{ij}|}{\| \theta_i - \tdiese_j\|_2}, \\ 
    \zeta_2 \triangleq d^{-1/2} \max_{i, j} \big|\|\eta_{ij}\|^2_2 - d \big|.
\end{gathered}
\end{align}
For the ease of notation, for any matching map $\pi$ we also define $L(\pi)$ as follows
\begin{align}
    L(\pi) = \sum_{i \in S_{\pi}}\nolimits \frac{\| 
    X_i - \Xdiese_{\pi(i)}\|_2^2}{\sigma^2 + \sdiese{}^2}.
\end{align}
We start with two auxiliary lemmas that 
will be used in other proofs as well. The proofs of 
these lemmas are deferred to the appendix.

\begin{restatable}{lemma}{lemmauno}
\label{lem:1}
Let $\pi$ be any matching map that can not be
obtained as a restriction of $\pi^*$ on a subset 
of $[n]$. Let $S_0\subset S^*$ be an arbitrary 
set satisfying 
$|S_0|\le |S_\pi|$ and $\{i\in S_\pi\cap S^*:\pi(i) = \pi^*(i)\}
\subset S_0$ and let $\pi_0$ be the restriction 
of $\pi^*$ to $S_0$. On the event $\Omega_0 = \{8\zeta_1 
\le \kall ; 4\sqrt{d}\,\zeta_2 \le \kall^2\}$, 
we have 
\begin{align}
    L(\pi) - L (\pi_0) & \ge (\nicefrac14)\kall^2 +
    d(|S_\pi| - |S_0|). 
\end{align}
\end{restatable}

Let $\pi$ be any matching map from $\mathcal P_{k}$ that is not a restriction of $\pi^*$. Since $|S_\pi|
= k \le k^*$, there exists necessarily a $\pi_0$ 
as in \Cref{lem:1} such that $|S_0| = |S_\pi|$. 
For this $\pi_0$, we have 
$L(\pi) - L (\pi_0)  \ge (\nicefrac14)\kall^2 >0$. 
This implies that $\pi$ cannot be a minimizer
of $L(\cdot)$ over $\mathcal P_k$. As a consequence, on $\Omega_0$, any minimizer of  $L(\cdot)$ over
$\mathcal P_k$ is a restriction of $\pi^*$. 
Therefore, on $\O_0$, we have $\hat S \subset S^*$ 
and $\hat\pi_k = \pi^*|_{\hat S}$. It remains to
prove that $\prob(\Omega_0)\ge 1-\alpha$.

\begin{restatable}{lemma}{lemmados}
\label{lem:2}
Let $\Omega_{0, x} = \{8\z_1 \le x\} \cap \{4\sqrt{d}\z_2 \le x^2\}$ with $\z_1, \z_2$ defined as in \eqref{def:zeta1zeta2}. Then, for every $x > 0$, 
$\prob(\O_{0, x}^\complement)$ is upper bounded by
\begin{align}
     2n^2 \Big( \exp\Big\{-\frac{x^2}{128}\Big\} + 
    \exp\Big\{ - \frac{x^2}{128d}\big(x^2 \wedge 4d\big) \Big\}\Big).
\end{align}
\end{restatable}
We apply \Cref{lem:2} with $x = \kall$ 
to show that $\prob(\O_0) \ge 1-\alpha$.
Clearly, a sufficient condition for the latter is 
\begin{align}
    \begin{cases}
     2n^2 \exp \left\{ -\kall^2/128 \right\} \leq \alpha/2, \\
     2n^2 \exp\bigg\{ - \frac{(\kall/16)^2}{d}\big(2\kall^2 \wedge 8d\big) \bigg\} \leq \alpha/2.
    \end{cases}
\end{align}
This system is equivalent to
\begin{align}
    \kall \geq 8 \Big(2\log \frac{4n^2}{\alpha}\Big)^{1/2} \quad \text{and} \quad \kall 
    \geq 4 \Big( \frac{d}2 
    \log \frac{4n^2}{\alpha}\Big)^{1/4}.
\end{align}
Therefore, if the signal-to-noise ratio satisfies 
\begin{align}
    \kall \ge 4 \Big(\big(d \log (\nicefrac{4n^2}{\alpha})\big)^{1/4} \vee \big(8\log(\nicefrac{4n^2}{\alpha})\big)^{1/2}
    \Big),
    \label{eq:kappa_condition}
\end{align}
we have $\prob(\O_0) \ge 1-\alpha$.
\end{proof}

\subsection{Matching Map Recovery for Unknown \texorpdfstring{$k^*$}{}} 
\label{sec:unknown-k}

If no information on $k^*$ is available, and the goal is to
recover the entire mapping $\pi^*$, one can proceed by 
model selection. More precisely, one can compute the 
collection of estimators $\{\hat\pi_k^{\textup{LSS}}: k\in[n]\}$ and select one of those using a suitable 
criterion. To define the selection criterion proposed 
in this paper, let us remark that 
\begin{align}
    \hat \Phi(k) = 
     \min_{\pi \in \mathcal{P}_k} 
     \sum_{i \in S_{\pi}} \nolimits
     {\| X_i - \Xdiese_{\pi(i)} \|^2_2}
\end{align}
is an increasing function. 
The increments of this function for $k\le k^*$ 
are not large, since they essentially 
correspond to the squared norm of a pure noise vector 
distributed according to a scaled $\chi^2$ distribution 
with $d$ degrees of freedom. The main idea behind the criterion we propose below is that the increment of 
$\hat \Phi$ at $k ^*$ is significantly larger than 
the previous ones and the gap is of order $\kall^2$. Therefore, if $\kall^2$ is larger than
the deviations of the $\chi^2_d$ distribution, we
are able to detect the value of $k^*$ and to 
estimate the true matching.

Based on these considerations, for any tolerance level $\alpha\in (0,1)$, we set $\sigma_0^2 = \s^2 + \sdiese{}^2$ and define the estimator\footnote{We 
use the  convention $\hat \Phi(0) = 0.$}
\begin{align}
    \hat k = 1 + \max\big\{k\in\{0,&\ldots,n-1\} 
    : \hat \Phi(k+1) -\hat \Phi(k) \\
   & \le \sigma_0^2 
    (d + \lambda_{n,d,\alpha}^2/4) \big\}
\end{align}
with $\lambda_{n,d,\alpha}$ as in 
\eqref{lambda}.

\begin{theorem}[Model selection accuracy]\label{thm:3}
Let $\alpha\in(0,1)$. If  
$\kall > \lambda_{n,d,\alpha}$, then it holds that $\prob\big(\hat k = 
k^*\text{ and }\hat\pi_{\hat k} = \pi^*\big ) 
\ge  1-\alpha$. Therefore, $\lambda_{n,d,\alpha}$ is an upper bound on the separation
distance in the case of unknown $k^*$. 
\end{theorem}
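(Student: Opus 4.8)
The plan is to run the whole argument on the favorable event $\Omega_{0,\lambda}$ of \Cref{lem:2}, writing $\lambda = \lambda_{n,d,\alpha}$ throughout, and to show that two-sided localization of the increments $\hat\Phi(k+1)-\hat\Phi(k)$ pins down $\hat k = k^*$. After dividing the stopping rule by $\sigma_0^2$, the threshold becomes exactly $d+\lambda^2/4$ and the rescaled errors read $\hat\Phi(k)/\sigma_0^2 = \min_{\pi\in\mathcal P_k} L(\pi)$. The key design choice is to work on $\Omega_{0,\lambda}$ rather than on $\Omega_0 = \Omega_{0,\kall}$: since $\lambda < \kall$ we have $\Omega_{0,\lambda}\subset\Omega_{0,\kall}$, so \Cref{lem:1} and hence \Cref{theorem_2} remain valid there, while the deviation controls now read $\zeta_1\le\lambda/8$ and $\sqrt d\,\zeta_2\le\lambda^2/4$, expressed through $\lambda$, which is precisely what the fixed threshold $d+\lambda^2/4$ can absorb. \Cref{lem:2} with $x=\lambda$ (the boundary case of the sufficient condition already verified in the proof of \Cref{theorem_2}) gives $\prob(\Omega_{0,\lambda})\ge 1-\alpha$, so it suffices to prove the deterministic statement ``$\hat k=k^*$ and $\hat\pi_{\hat k}=\pi^*$'' on $\Omega_{0,\lambda}$.

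To get $\hat k\ge k^*$, I show every increment strictly before $k^*$ stays below the threshold. For $k+1\le k^*$, \Cref{theorem_2} forces any minimizer over $\mathcal P_{k+1}$ (and over $\mathcal P_k$) to be a restriction of $\pi^*$, so $\hat\Phi(k)/\sigma_0^2$ equals the sum of the $k$ smallest values among $\{\|\eta_{i,\pi^*(i)}\|_2^2: i\in S^*\}$ and the increment is exactly the $(k{+}1)$-st smallest such value. Bounding it by $\max_{i\in S^*}\|\eta_{i,\pi^*(i)}\|_2^2 \le d + \sqrt d\,\zeta_2 \le d + \lambda^2/4$ places $k = k^*-1$ in the index set defining $\hat k$, whence $\hat k\ge k^*$.

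To get $\hat k\le k^*$, I show each increment from $k\ge k^*$ strictly exceeds the threshold, so no such $k$ enters the maximization. Fix $k\in\{k^*,\dots,n-1\}$ and set $\pi=\hat\pi_{k+1}$; its support has size $k+1>k^*=|S^*|$, so $\pi$ contains at least one incorrect match $(i,\pi(i))$ with $\pi(i)\neq\pi^*(i)$, for which $\kappa_{i,\pi(i)}\ge\kall$. Deleting from $\pi$ its single costliest match yields an element of $\mathcal P_k$, so $\hat\Phi(k+1)-\hat\Phi(k)$ is at least the largest match cost, which dominates the incorrect-match cost. Writing that cost as $\|u+\eta_{ij}\|_2^2$ with $u=(\theta_i-\tdiese_{j})/\sigma_0$, $\|u\|=\kappa_{ij}\ge\kall$, and lower-bounding by $\|u\|^2-2|u^\top\eta_{ij}|+\|\eta_{ij}\|_2^2 \ge \kall^2-2\kall\zeta_1+d-\sqrt d\,\zeta_2$ (using monotonicity of $\kappa\mapsto\kappa^2-2\kappa\zeta_1$ for $\kappa>\zeta_1$), the bounds $\zeta_1\le\lambda/8$ and $\sqrt d\,\zeta_2\le\lambda^2/4$ on $\Omega_{0,\lambda}$ reduce the claim to $\kall^2-\kall\lambda/4>\lambda^2/2$, which holds since $\kall>\lambda$ gives $\kall(\kall-\lambda/4)>\tfrac34\lambda^2$.

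Combining the two parts gives $\hat k=k^*$ on $\Omega_{0,\lambda}$, and then $\hat\pi_{\hat k}=\hat\pi_{k^*}$ is, by \Cref{theorem_2}, a restriction of $\pi^*$ with support of full size $k^*$, i.e.\ $\hat\pi_{\hat k}=\pi^*$. The main obstacle I anticipate is the $\hat k\le k^*$ direction: it must hold \emph{uniformly} over all $k\ge k^*$, not merely at $k=k^*$, and it is what forces the event $\Omega_{0,\lambda}$ rather than $\Omega_{0,\kall}$ — on the larger event $\sqrt d\,\zeta_2$ is only controlled by $\kall^2/4\ge\lambda^2/4$, too crude to separate the small increments from the threshold. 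The ``remove the costliest match'' reduction is what makes the uniform lower bound clean, and the technical heart is checking that an incorrect match always exists and that its cost strictly beats the threshold.
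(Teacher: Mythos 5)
Your proof is correct, and at the top level it follows the same strategy as the paper: work on the event $\Omega_{0,\lambda_{n,d,\alpha}}$ (the paper's $\Omega_1$), whose probability is at least $1-\alpha$ by \Cref{lem:2}, note that it is contained in $\Omega_0=\Omega_{0,\kall}$ so that \Cref{theorem_2} applies, show that the increments of $\hat\Phi$ before $k^*$ stay below the threshold $\sigma_0^2(d+\lambda_{n,d,\alpha}^2/4)$ (your ``$(k{+}1)$-st smallest $\|\eta_{i,\pi^*(i)}\|_2^2$'' argument is exactly the content of \Cref{lem:6}), and that the increment at $k^*$ exceeds it.

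Where you genuinely depart from the paper is on the upper bound $\hat k\le k^*$. The paper invokes \Cref{lem:5}, which via \Cref{lem:1} applied to $\hat\pi_{k^*+1}$ with $\pi_0=\pi^*$ gives $\hat L(k^*+1)-\hat L(k^*)\ge d+\kall^2/4$, i.e.\ it only certifies that the single index $k=k^*$ is excluded from the set whose maximum defines $\hat k$; excluding all $k>k^*$ is left implicit (it follows from the discrete convexity of $k\mapsto\hat\Phi(k)$, a standard min-cost-flow fact, but the paper never says so). You instead prove the exclusion uniformly over all $k\ge k^*$ by a direct argument: since $|S_{\hat\pi_{k+1}}|=k+1>|S^*|$ there is at least one pair with $\kappa_{i,\pi(i)}\ge\kall$, deleting the costliest edge of $\hat\pi_{k+1}$ gives a competitor in $\mathcal P_k$, hence $\hat\Phi(k+1)-\hat\Phi(k)$ dominates that pair's cost, which on $\Omega_{0,\lambda}$ is at least $\sigma_0^2(\kall^2-\kall\lambda_{n,d,\alpha}/4+d-\lambda_{n,d,\alpha}^2/4)>\sigma_0^2(d+\lambda_{n,d,\alpha}^2/4)$ whenever $\kall>\lambda_{n,d,\alpha}$. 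This buys a self-contained, fully rigorous treatment of the $\max$-based definition of $\hat k$ without appealing to \Cref{lem:5} or to convexity; the paper's route is shorter because it reuses \Cref{lem:1} as a black box. The two constants obtained on the large-increment side differ slightly but both comfortably clear the threshold under $\kall>\lambda_{n,d,\alpha}$.
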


A remarkable feature put forward by this result is that
a data-driven selection of $k$ based on 
the increments of the test statistics $\hat\Phi$ leads 
to the recovery of $\pi^*$, with high probability, under
the same constraint on the separation rate as in the case of known $k^*$. It is however important to
underline 
that this criterion requires the knowledge of the noise
level. Therefore, from the point of view of 
statistical accuracy, the case of
unknown $k^*$ is not more difficult than the case of 
known $k^*$, provided the noise levels are known. It is also worth mentioning that our procedure requires only $\sigma_0^2 = \s^2 + \sdiese{}^2$, not $\sigma$ and $\sdiese$ separately. 

\begin{proof}[Proof of \Cref{thm:3}]
The main parts of the proof will be done in the following two lemmas, the proofs of which are 
postponed to the appendix. For the known value of $\sigma_0^2$ it is more convenient to work with the normalized version of test statistics $\hat\Phi(\cdot)$, 
denoted by $\hat L(\cdot)$ and defined by
\begin{align}
    \hat L(k) = \min_{\pi \in \mathcal{P}_k}
     \sum_{i \in S_{\pi}} \nolimits
     \frac{\| X_i - \Xdiese_{\pi(i)} \|^2_2}
     {\sigma^2 + \sigmadiese{}^2} \equiv \frac{\hat \Phi(k)}{\sigma_0^2}.
\end{align}

\begin{restatable}{lemma}{lemmatres}
\label{lem:5}
On the event, $\O_0 = \{8\zeta_1 \le \kall;4\sqrt{d}\,\zeta_2\le \kall^2\}$, we have $\hat L
(k^*+1) - \hat L(k^*) \ge d + \kall^2/4$. 
\end{restatable}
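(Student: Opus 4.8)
The plan is to combine the trivial upper bound $\hat L(k^*)\le L(\pi^*)$ with a uniform lower bound on $\hat L(k^*+1)$ that follows directly from \Cref{lem:1}. No new concentration estimate is needed: the whole probabilistic content is already packaged into the event $\Omega_0$ and the deterministic conclusion of \Cref{lem:1}.

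First I would record the easy half. Since $\pi^*$ has support $S^*$ of cardinality $k^*$, it belongs to $\mathcal P_{k^*}$, so by definition of $\hat L$ as a minimum over $\mathcal P_{k^*}$,
\begin{align}
    \hat L(k^*) \le L(\pi^*).
\end{align}

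Next I would bound $\hat L(k^*+1)$ from below, uniformly over $\mathcal P_{k^*+1}$. Fix an arbitrary $\pi\in\mathcal P_{k^*+1}$, so that $|S_\pi| = k^*+1 > k^* = |S^*|$. Any restriction of $\pi^*$ is supported on a subset of $S^*$ and hence has support of size at most $k^*$; therefore $\pi$ cannot be a restriction of $\pi^*$, and \Cref{lem:1} is applicable to it. I would invoke that lemma with the specific choice $S_0 = S^*$ and $\pi_0 = \pi^*$ (the restriction of $\pi^*$ to $S^*$ being $\pi^*$ itself). Both hypotheses of \Cref{lem:1} are immediate: $|S_0| = k^* \le k^*+1 = |S_\pi|$, and $\{i\in S_\pi\cap S^*:\pi(i)=\pi^*(i)\}\subset S^* = S_0$ trivially. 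On $\Omega_0$, \Cref{lem:1} then gives
\begin{align}
    L(\pi) - L(\pi^*) \ge \tfrac14\kall^2 + d\big(|S_\pi| - |S_0|\big) = \tfrac14 \kall^2 + d,
\end{align}
using $|S_\pi| - |S_0| = (k^*+1) - k^* = 1$.

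Since this holds for every $\pi\in\mathcal P_{k^*+1}$, taking the minimum over $\mathcal P_{k^*+1}$ yields $\hat L(k^*+1)\ge L(\pi^*) + d + \tfrac14\kall^2$ on $\Omega_0$. Combining with the upper bound from the first step,
\begin{align}
    \hat L(k^*+1) - \hat L(k^*) \ge \hat L(k^*+1) - L(\pi^*) \ge d + \tfrac14\kall^2,
\end{align}
which is exactly the claim. The argument is structurally short given \Cref{lem:1}; the only points demanding care are the two I would verify explicitly—that every size-$(k^*+1)$ map fails to be a restriction of $\pi^*$, and that $S_0 = S^*$ is an admissible choice in \Cref{lem:1}—both of which reduce to the cardinality comparison $k^*+1 > k^*$. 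I do not anticipate a genuine obstacle here, as the substantive estimate (the deterministic gap valid on $\Omega_0$) is entirely inherited from \Cref{lem:1}.
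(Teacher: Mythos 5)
Your proof is correct and follows essentially the same route as the paper: both reduce the claim to \Cref{lem:1} applied with $\pi_0=\pi^*$ and $S_0=S^*$, exploiting that any $\pi\in\mathcal P_{k^*+1}$ cannot be a restriction of $\pi^*$. The only (minor, and arguably cleaner) difference is that you use the trivial bound $\hat L(k^*)\le L(\pi^*)$, whereas the paper invokes the equality $\hat\pi_{k^*}=\pi^*$ on $\Omega_0$ established in the proof of \Cref{theorem_2}.
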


\begin{restatable}{lemma}{lemmaquatro}
\label{lem:6}
On the event, $\O_0 = \{8\zeta_1 \le \kall;4\sqrt{d}\,\zeta_2\le \kall^2\}$, for every 
$k< k^*$, we have 
$\hat L(k+1) - \hat L(k) \le  d + \sqrt{d}\,\zeta_2$. 
\end{restatable}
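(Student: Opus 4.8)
The plan is to leverage the structural description of the minimizer already obtained in the proof of \Cref{theorem_2}, which turns $\hat L$ into a sum of order statistics on the range $k\le k^*$. First I would recall that, on $\O_0$, \Cref{lem:1} together with the argument given in the proof of \Cref{theorem_2} guarantees that \emph{for every} $k\le k^*$ any minimizer of $L(\cdot)$ over $\mathcal P_k$ is a restriction of $\pi^*$ to some $S\subset S^*$ with $|S|=k$. The key point is that for an inlier $i\in S^*$ one has $X_i-\Xdiese_{\pi^*(i)}=\sigma\xi_i-\sigmadiese\xidiese_{\pi^*(i)}$, so that, setting $\rho_i \triangleq \|\eta_{i,\pi^*(i)}\|_2^2$, the normalized residual of the true pair equals exactly $\rho_i$, a $\chi^2_d$ random variable.

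Consequently, for $k\le k^*$ the value $\hat L(k)$ reduces to choosing the cheapest $k$ inliers:
\[
  \hat L(k)=\min_{\substack{S\subset S^*\\ |S|=k}}\;\sum_{i\in S}\rho_i
  =\sum_{l=1}^{k}\rho_{(l)},
\]
where $\rho_{(1)}\le\cdots\le\rho_{(k^*)}$ are the order statistics of $(\rho_i)_{i\in S^*}$. Since the hypothesis $k<k^*$ means $k+1\le k^*$, both $\hat L(k)$ and $\hat L(k+1)$ are given by this formula, and the increment telescopes to a single order statistic,
\[
  \hat L(k+1)-\hat L(k)=\rho_{(k+1)}\le\max_{i\in S^*}\rho_i .
\]

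Finally I would bound the right-hand side directly from the definition $\zeta_2=d^{-1/2}\max_{i,j}\big|\|\eta_{ij}\|_2^2-d\big|$, which yields $\rho_i=\|\eta_{i,\pi^*(i)}\|_2^2\le d+\sqrt{d}\,\zeta_2$ for every $i$, and hence $\hat L(k+1)-\hat L(k)\le d+\sqrt d\,\zeta_2$, as required. I do not expect a genuine obstacle here: once the structural fact from \Cref{theorem_2} is invoked, the statement is essentially the elementary observation that the increments of a sum of the smallest residuals are order statistics, uniformly controlled by their maximum. The only place where the event $\O_0$ is truly needed is in ensuring that the minimizer never selects an outlier pair, which is exactly what \Cref{lem:1} provides; the remaining steps are deterministic on $\O_0$.
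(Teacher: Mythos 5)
Your proposal is correct and follows essentially the same route as the paper: both rest on the fact (from \Cref{lem:1} via the proof of \Cref{theorem_2}) that on $\O_0$ every minimizer over $\mathcal P_k$ with $k\le k^*$ is a restriction of $\pi^*$, so the increment of $\hat L$ is a single inlier residual $\|\eta_{i,\pi^*(i)}\|_2^2\le d+\sqrt{d}\,\zeta_2$. The only cosmetic difference is that you identify $\hat L(k)$ with the sum of the $k$ smallest residuals and read off the increment as an order statistic, whereas the paper reaches the same one-term increment through an exchange argument showing the optimal supports can be taken nested.
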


\Cref{lem:2} implies that the probability of the event
$\Omega_1 = \{8\zeta_1 \le \lambda_{n,d,\alpha};
4\sqrt{d}\,\zeta_2 \le \lambda^2_{n,d,\alpha}\}$ 
is at least $1-\alpha$. Since 
$\Omega_1$ is included in $\O_0$, in view of 
\Cref{lem:6}, on $\Omega_1$ we have $\hat L(k+1) 
- \hat L(k) \le d + \lambda^2_{n,d,\alpha}/4$ for 
any $k<k^*$. On the 
other hand, in view of \Cref{lem:5}, on the same 
event we have $\hat L(k^*+1)-\hat L(k^*) \ge d 
+ \kall^2/4 > d + \lambda^2_{n,d,\alpha}/4$. 
This implies that $\hat k = k^*$ and, therefore, 
$\hat\pi_{\hat k} = \hat\pi_{k^*}$. Thanks to 
\Cref{theorem_2}, on the same event $\O_1$, we have 
$\hat\pi_{k^*} = \pi^*$.
\end{proof}

\subsection{Matching Map Recovery for Unknown \texorpdfstring{$k^*$}{} and Unknown Noise Level}\label{ssec:joint}

In the previous subsection, we considered the case 
of unknown $k^*$ with known noise levels $\sigma$ 
and $\sdiese$. Notice that we do not need to estimate 
parameters $\sigma, \sdiese$ separately, it is 
sufficient to estimate only their squared sum, 
which is denoted by $\sigma_0^2$. In the definition 
of $\hat{k}$, we use the value of $\sigma_0^2$ 
in the threshold for $\hat \Phi(k+1) -\hat \Phi(k)$. 
When both $k^*$ and $\sigma_0^2$ are unknown, 
we first estimate $\sigma_0^2$ and then plug it 
in the selection criterion of $k$. 

Thus, we define ``candidate'' estimators of $\sigma_0^2$ 
\begin{align}\label{eq:sigma_est}
    \big\{\bar{\sigma}_k^2 = \frac{\hat{\Phi}(k)}{kd},\ 
    k\in [n]\big\}.
\end{align}
The rationale for this definition is that for
small values of $k$, $\hat\pi_k^{\textup{LSS}}$ 
contains only correct matches and, therefore, 
$\hat\Phi(k)/\sigma_0^2$ is merely a sum of $k$ 
independent random variables drawn from the 
$\chi^2$ distribution with $d$ degrees 
of freedom. Hence, after division by ${kd}$, 
we obtain an estimator of $\sigma_0^2$. However, 
from the perspective of testing the values of $k$, 
we need to slightly overestimate the noise 
variance. This is done through the multiplication
by the inflation factor $1/(1-\gamma)$. 

We are now ready to proceed with the proof of our main result stated in \Cref{thm:main}.

\begin{proof}[Proof of \Cref{thm:main}]
We will provide the proof only in the high dimensional setting, that is we assume throughout the proof that $d \ge 800\log(2n/\sqrt{\alpha})$ . First, we show that for every $k < k^*$ the condition from $\hat{\Phi}(k+1) - \hat{\Phi}(k) \le \frac{d + \lambda}{1-\gamma} \bar{\sigma}_{k}^2$ is satisfied on an event of high probability. Second, we prove that for $k=k^*$ this condition is violated on the same event of high probability. Therefore, the combination of these two results concludes the proof. 

Using the first part of the proof of \Cref{lem:6}, for all $k <k^*$ on the event $\Omega_0 = \{8\zeta_1 
\le \lambda ; 4\sqrt{d}\zeta_2 \le \lambda^2\}$, 
we have 
\begin{align}
    \frac{\hat \Phi(k+1) - \hat \Phi(k)}{\hat \Phi(k)} 
    &= \frac{\sum_{\hat S_{k+1}} \|\eta_{i,\pi^*(i)}\|^2_2 - 
    \sum_{\hat S_{k}} \|\eta_{i,\pi^*(i)}\|^2_2}{
    \sum_{i\in\hat S_{k}} \|\eta_{i,\pi^*(i)}\|^2_2}\\
    &\le \frac{d + \sqrt{d}\zeta_2}{kd + k\sqrt{d} 
    \cdot \min_{1\le i \le n} \frac{\|\eta_{i,\pi^*(i)}
    \|^2_2-d}{\sqrt{d}}}\\
    &\le \frac{d + \sqrt{d}\zeta_2}{k(d-\sqrt{d} \zeta_2)_+}. 
\end{align}
Using the second part of the proof of \Cref{lem:2}, 
we can further upper bound the expression from the 
last display as follows 
\begin{align}
     \frac{\hat \Phi(k+1) - \hat \Phi(k)}{\hat \Phi(k)} 
     \le \frac{d + \lambda^2/4}{k(d - \lambda^2/4)_+}. 
\end{align}
Now we show that for $k=k^*$ the relative difference of 
function $\hat \Phi(\cdot)$ at points $k^*+1$ and $k^*$ 
is large enough. Indeed, we have 
\begin{align}
\frac{\hat \Phi(k^*+1) - \hat \Phi(k^*)}{\hat 
    \Phi(k^*)} &\ge 
    \frac{\hat \Phi(k^*+1) - \hat \Phi(k^*)}{\sum_{i\in S^*} \|X_i - \Xdiese_{\pi^*(i)}\|^2_2}\\
    &\ \ge \frac{\min_{i \neq \pi^*(j)} \|X_i - \Xdiese_j\|_2^2}{\sigma_0^2\sum_{i\in S^*} \|\eta_{i, \pi^*(i)}\|_2^2}\\ 
    &\ \ge \frac{\kall^2-2\zeta_1\kall+d-\sqrt{d}\zeta_2}{k(d+\sqrt{d}\zeta_2)},
\end{align}
where the first inequality follows from the definition of function $\hat{\Phi}(\cdot)$, while second and third inequalities are consequences of the definitions introduced earlier in this section.
Then, on the event $\Omega_0$ we bound the quantities $\zeta_1$ and $\zeta_2$ from the last display along with using the condition on $\kall$. One can now check that if $d \ge 800\log(2n/\sqrt{\alpha})$ then $\lambda^2 \le \nicefrac{4}{5} \, d$, which in turn implies 
\begin{align}
    \frac{\hat \Phi(k^*+1) - \hat \Phi(k^*)}{\hat \Phi(k^*)} &
    \ge \frac{\kall^2- \kall \lambda / 4 + d - \lambda^2/4}{k(d + \lambda^2/4)}\\ 
    &\ge \frac{d + \lambda^2/4}{kd(1-\lambda^2/4d)}.
\end{align}
Thus, we have shown that on the event $\Omega_0$ our model selection procedure will select $k^*$, \ie $\hat{k} = k^*$. The last equality implies that $\hat{\pi}_{\hat{k}} = \hat{\pi}_{k^*}$. Moreover, in view of \Cref{theorem_2} on the same event $\Omega_0$ we have $\hat{\pi}_{k^*} = \pi^*$. Finally, using \Cref{lem:2}, we get that the event $\Omega_0$ occurs with probability at least $1-\alpha$. Therefore, the desired result follows.
\end{proof}

\subsection{Lower Bounds}

\Cref{theorem_2} and 
\Cref{thm:3} imply that the minimax rate of separation
in the problem of recovering $\pi^*$ is at most of the
order of $\lambda_{n,m,d,\alpha}$ defined in \eqref{kall:rate}. 
An interesting and natural question is whether this rate is optimal. In the literature, the lower bounds for similar models have been proved, see \cite[Theorem 2]{collier2016minimax} for the case of $n=m$ and \cite[Theorem 5]{galstyan2021optimal} for the general rectangular case $m \ge n$. Our model is more general\footnote{Indeed, it involves an additional (unknown) parameter $k^*$.} than those of  
these two references, the same lower bound applies to 
our model. Therefore, combining the results of \Cref{thm:3} and \citep[Theorem 2]{collier2016minimax} along with the fact that the separation distance has the same rate in both theorems implies that $\lambda_{n,m,d,\alpha}$ is the optimal separation rate. 


\section{COMPUTATIONAL ASPECTS AND NUMERICAL EXPERIMENTS}\label{Sec:num}

In this section, we address computational aspects of the optimization problem from \eqref{eq:lss-k}. We show that it can be cast into a minimum cost flow problem. The latter is also known as an imperfect matching problem and, to the best of our knowledge, the fastest algorithm with complexity ${O}(\sqrt{k^*}\,n^2\log(k^*))$ is proposed in \citep{goldberg15}. We then report the results of numerical experiments conducted on both synthetic and real data and highlight their relation to the aforementioned theorems stated and proved in Sections \ref{sec:theoreticalResults} and \ref{sec:main_proof}, respectively. Our reproducible codes are provided in the supplementary material.

\subsection{Relation to Minimum Cost Flow Problem}
Let $d_{ij} = \| X_i - \Xdiese_j \|^2_2$, for $(i, j) \in [n] \times [m]$, be the squared distances between observed feature-vectors. Consider the following linear program
\begin{align}\label{lss-simplex1}
    \text{minimize} \,\, &\sum_{i=1}^n \sum_{j=1}^m d_{ij} w_{ij}
\end{align}
subject to $\boldsymbol w = (w_{ij})_{(i, j) \in [n] \times [m]} \in [0, 1]^{n\times m}$ satisfying 
\begin{align}\label{lss-simplex2}
    \sum_{i=1}^n w_{i\boldsymbol{\cdot}} &\le 1, \quad 
    \sum_{j=1}^m w_{\boldsymbol{\cdot} j} \le 1, \quad
    \sum_{i=1}^n \sum_{j=1}^m w_{ij} = k,
\end{align}
known as the minimum cost flow problem. 
Above, the notation $\sum_{i=1}^n w_{i\boldsymbol{\cdot}} \le 1$ means that 
$\sum_{i=1}^n w_{ij} \le 1$ for all $j \in [m]$, and similar convention is used for $\sum_{j=1}^m w_{\boldsymbol{\cdot}j} \le 1$. 

The formulation as an MCF problem is obtained by adding two auxiliary nodes to the graph, called {\it{source}} and {\it{sink}} (see Fig.~\ref{fig:1}). We are interested in the flow of the minimal cost, where the cost of each edge except those adjacent to {\it{source}} and {\it{sink}} is assigned from the distance matrix $\{d_{ij}\}_{i,j=1}^{n,m}$. The cost of the rest of the edges is equal to $0$. The capacity that can be sent through each edge is equal to $1$. The supply of {\it{source}} and {\it{sink}} are $k$ and $-k$, respectively. The solution of  
\eqref{lss-simplex1} given the constraints \eqref{lss-simplex2} provides the weights $\{w_{ij}\}_{i,j=1}^{n,m}$, from which the matching $\widehat{\pi}_{k}^{\textup{LSS}}$ can be recovered. Indeed, if $w_{ij}= 1$ then $X_i$ and $\Xdiese_j$ are matched. The last constraint in \eqref{lss-simplex2} implies that the matching
size (number of $w_{ij}$ that are equal to $1$) will 
be $k$. Though the algorithm provided in \citep{goldberg15} has the fastest known asymptotic complexity, the implementation of their algorithm is out of the scope of this paper. Therefore, in our experiments, we used \texttt{SimpleMinCostFlow} solver from OR-tools library \citep{ortools}.

\subsection{Numerical Experiments on Synthetic Data}
\label{ssec:syntheticData}

In this part, we conducted several experiments on synthetic data to support our theoretical findings. In these experiments, we constructed two sets of sizes $n=m=100$ consisting of $d=100$-dimensional data points. The underlying matching size $k^*$ was set to $60$. In other words, in each point cloud, we had $60$ \textit{inliers} and $40$ \textit{outliers}. We also fixed the confidence level at $1\%$, \ie $\alpha=0.01$. The procedure for generating synthetic data was as follows. We set $S^* = [k^*]$ and chose an additional parameter $\tau$ used to control $\kall$ throughout 
the experiments. Then, each coordinate of $\btheta$ and $\btdiese$ was independently sampled from a Gaussian distribution 
with $0$ mean and standard deviation $\tau$. Additionally, for every $i \notin S^* $, 
we incremented every coordinate of $\theta_i$ by 
$\tau$, \ie $\theta_{i} \leftarrow \theta_{i} + 
\tau\bm{1}$ and, for every $j \notin \text{Im}(\pi^*)$,
we incremented every coordinate of $\tdiese_j$ by $2\tau$. This allows the outliers of each point cloud to be sufficiently far from each other, hence a pair of outliers is less likely to be confused as a pair of inliers. Notice also that such a choice of generating outliers fits within the conditions of \Cref{thm:main}. Finally, the sequences $\bX$ and $\bXdiese$ were generated according to \eqref{model} with $\pi^*(i) = i$, for all $i\in S^*$. We measured the performance of our estimator $\widehat{\pi}_{k^*}^{\textup{LSS}}$ by its precision, which is the number of correctly matched inliers divided by $k = k^*$. 
\begin{figure}[ht]
    \centering
    \includegraphics[width=0.49\textwidth, height=2.8cm]{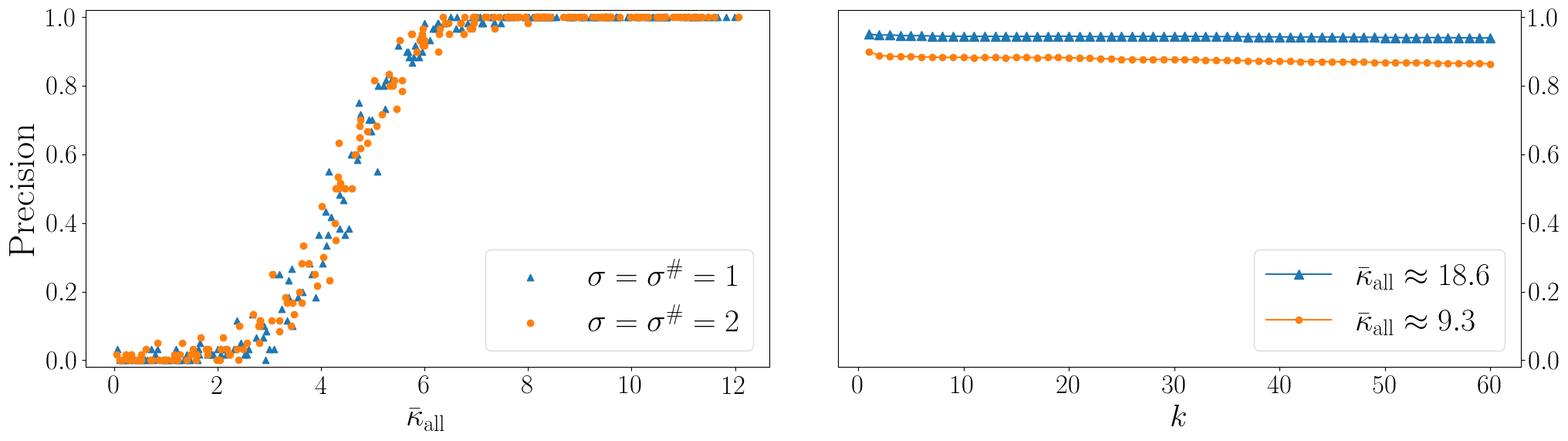}
    \caption{
    Left: the dependence of the matching precision of $\widehat{\pi}_{k^*}^{\textup{LSS}}$ on $\kall$ for  $\sigma=\sdiese=1$ (blue triangles) and $\sigma=\sdiese=2$ (orange circles). Right: the accuracy of subset recovery for a known $k \le k^*$. In both plots, for $\kall$ large enough we observe that the estimated matching $\widehat{\pi}_{k^*}^{\textup{LSS}}$ indeed coincides with $\pi^*$ or yields a subset of it (right plot).}
    \label{fig:theorem_1_prec}
\end{figure}

Let us now comment on the results obtained in Fig. \ref{fig:theorem_1_prec}. On the left plot, given that $\kall$ is large enough we observe that $\widehat{\pi}_{k^*}^{\textup{LSS}} = \pi^*$ holds. Another interesting feature that can be inferred from Fig. \ref{fig:theorem_1_prec} is that the precision is independent of the noise levels $\s, \sdiese$. This feature is consistent with \Cref{thm:main} through the definition of $\lambda_{n, m, d, \alpha}$ which is independent of $\s, \sdiese$. 
The right plot upholds the findings of \Cref{theorem_2}. Indeed, for some $k \le k^*$ the support of $\widehat{\pi}_{k}^{\textup{LSS}}$ will be included in the support of $\pi^*$, plus, the values of $\widehat{\pi}_{k}^{\textup{LSS}}$ and $\pi^*$ coincide on this support. Moreover, by plugging the values of this experiment into \eqref{lambda}, we get $\lambda_{n, d, \alpha} \approx 44$, which means that the results proved in Theorems \ref{thm:main} and \ref{theorem_2} hold whenever $\kall \ge \nicefrac{5}{4}\lambda_{n, d, \alpha}$ and $\kall \ge \lambda_{n,d, \alpha}$, respectively. The results shown in Fig. \ref{fig:theorem_1_prec} are the average values over $200$ independent trials of the same experiment. 




In the previous experiment, we focused on the case of known $k^*$ and noise levels. First, we consider the case of known noise levels and apply the estimator proposed in \Cref{sec:unknown-k}. In Fig. \ref{fig:k_alpha}, we illustrate how the unknown value of $k^*$ is estimated for two different known noise levels. Namely, given that $\kall$ is ``small'' ($\kall < 15$) all the pairs are considered as inliers and $\widehat{k} = 100$, as opposed to the experimental setup, where $k^* = 60$. However, given that the value of $\kall$ is large enough, the algorithm starts to differentiate inliers from outliers, hence we have $\widehat{k} = 60$ starting from $\kall \approx 22$, which confirms the result of \Cref{thm:3}. Recall that the result of \Cref{thm:3} holds whenever $\kall \ge \lambda_{n, d, \alpha} \approx 44$. Moreover, similar to the left plot of Fig. \ref{fig:theorem_1_prec}, the threshold after which the estimation becomes exact does not depend on noise levels. The results reported in Fig. \ref{fig:k_alpha} are the average values over $200$ independent trials of the same experiment. 

Finally, we consider the case when no additional information is available neither about $k^*$ nor about $\s^2_0$, which corresponds to the most generic setting of \Cref{thm:main}. Recall the sequential procedure from \Cref{sec:theoreticalResults} of estimating the triplet $(k^*,\sigma_0^2,\pi^*)$. In Fig. \ref{fig:k_alpha_sigma_joint}, we plot the dependencies of estimates of $k^*$ and $\s_0^2$ on $\kall$. There are several key features that are worth noticing. First, for small values of $\kall$ it is impossible to distinguish inliers from outliers, hence all points are treated as inliers and $\widehat{k} = 100$. As a consequence $\s_0^2$ is overestimated. Second, for large enough values of $\kall$ both $k^*$ and $\s_0^2$ are accurately estimated, therefore the precision of $\widehat{\pi}_{\widehat{k}}^{\textup{LSS}}$ is (close to) $1$. To make a link with \Cref{thm:main} we also include the theoretical value of threshold $\lambda_{n, m, d, \alpha}$ in the plots. 
\begin{figure}
    \centerline{\includegraphics[width=0.49\textwidth]{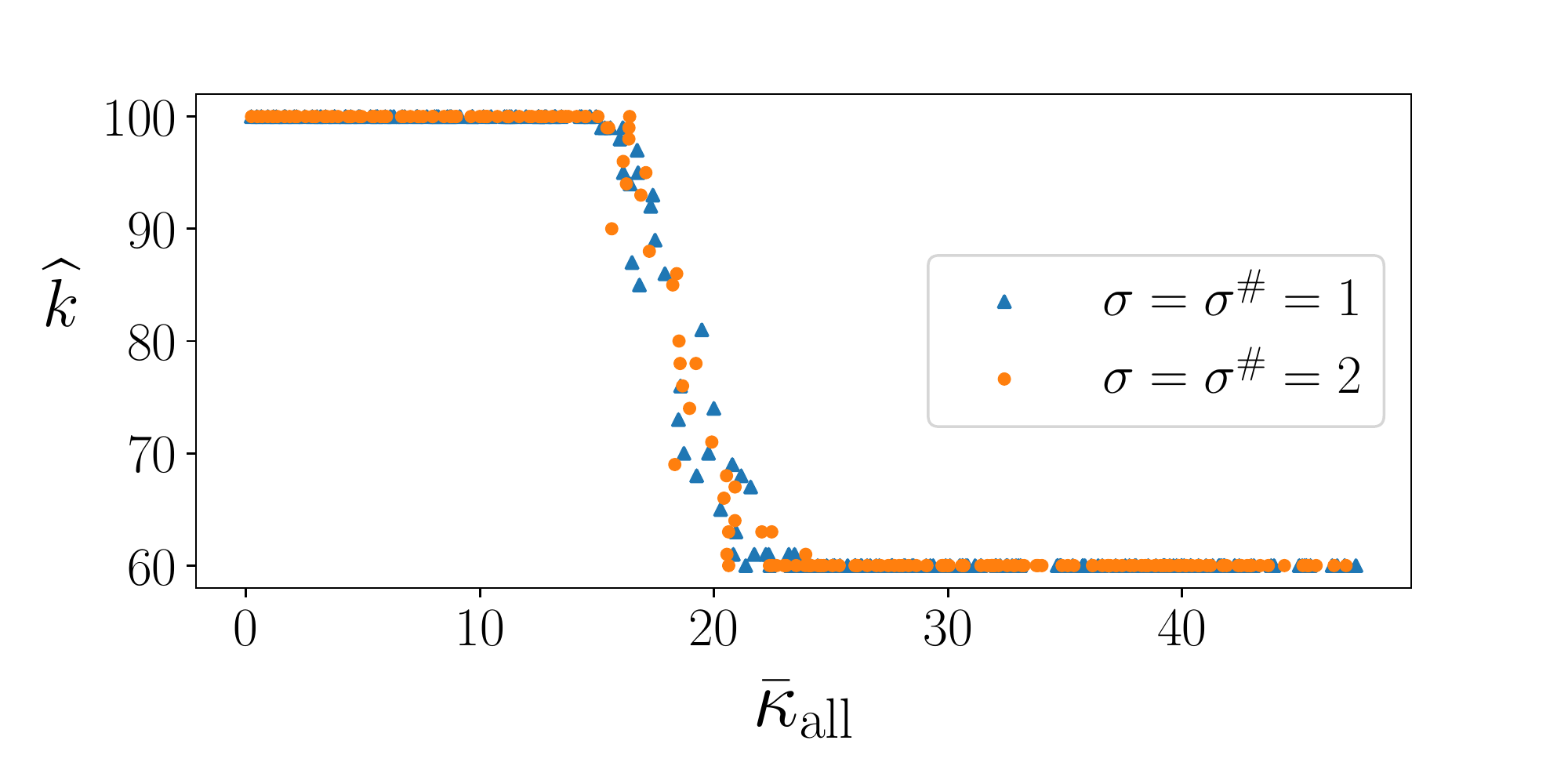}}
    \vspace{-12pt}
    \captionof{figure}{
    Dependence of the estimate of $k^*$ on $\kall$ when the noise levels $\s$ and $\sdiese$ are known. Observe that given that the value of $\kall$ is large enough our procedure recovers the true value of $k^*=60$ while failing to identify the outliers when the signal-to-noise ratio is small and hence estimating $\widehat{k} = 100$. 
    }
    \label{fig:k_alpha}
    \vspace{-4pt}  
\end{figure}  

\begin{figure}[ht]
    \centering
    \includegraphics[width=0.47\textwidth, scale=.5]{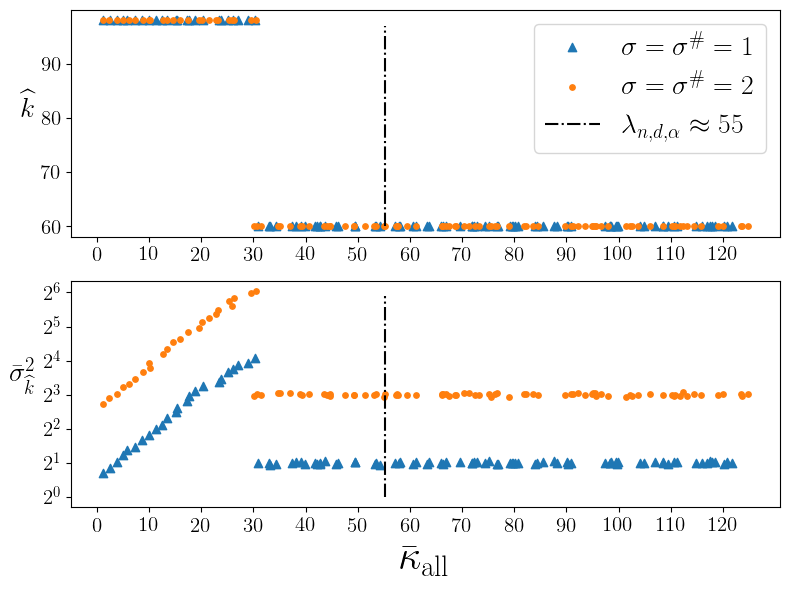}
    \vspace{-12pt}
    \caption{Two settings were considered: $\sigma,\sdiese=1$ (blue triangles) and 
    $\sigma,\sdiese=2$ (orange circles). The top plot shows the dependence of the estimate of $k^*$ 
    on $\kall$, while the bottom one---the
    dependence of the estimate \eqref{eq:sigma_est} 
    of ${\sigma}_0^2$ on $\kall$.}
    \label{fig:k_alpha_sigma_joint}
    \vspace{-4pt}  
\end{figure}
There are several takeaways that we would like to highlight. In the theoretical part, we proved that provided $\kall$ is large than the given threshold, it is possible to recover $\pi^*$ and to estimate $k^*$. The threshold provided in theorems is rate optimal, however, it is not sharp. We observe that for different tasks the sharp threshold might differ up to a constant factor. For instance, in the case of known $k^*$ and $\s_0^2$ we see that the threshold seems close to $7$ (see Fig. \ref{fig:theorem_1_prec}), while it is around $32$ in the more difficult setting of unknown $k^*$ and $\s_0^2$, as it is shown in Fig. \ref{fig:k_alpha_sigma_joint}. The setting of Fig. \ref{fig:k_alpha} is somewhat in between, since $\s_0^2$ is assumed to be known but $k^*$ is unknown, hence the ``change point'' occurs around $22$, which is between $7$ and $32$. In practice, one could replace the theoretical quantiles of $\chi^2_d$ distribution with the empirical counterparts of the squared distances between matching pairs $X_i$ and $X_{\widehat{\pi}(i)}$ for a given estimator $\widehat{\pi}$. 

\subsection{Estimation of \texorpdfstring{$k^*$}{} for Real Data}
\label{ssec:RealData}

In what follows we carry out experiments on real data. In this experiment, we only focus on the estimation of the matching size in the setting of matching the keypoints (SIFT descriptors \citep{lowe2004distinctive}) of two different images of the same scene. The goal is to be able to match the same keypoints considering the noise present in the images, and not match the keypoints that are present in only one of the two images. We refer to Appendix B for more description and additional experiments in the keypoint matching problem.  

Experiments on real data were conducted using IMC-PT 2020 dataset \citep{Jin2020} consisting of 16 scenes with corresponding image sets and 3D point clouds. We used images from the ``Reichstag'' scene to illustrate how the procedure from \Cref{ssec:joint} can be used to estimate the matching size. To construct the dataset, we randomly chose $1000$ distinct image pairs of the same scene. Then, a scene point cloud is used to obtain pseudo-ground-truth matching between keypoints on different images of the same scene. Using these pseudo-ground-truth matching keypoints we then choose $k^*=60$ keypoints that are present in both images and add to each of them $40$ outlier keypoints. Hence, we have $n=m=100$ with $k^*=60$ and $d=128$-dimensional SIFT descriptors. Notice that in the case of images, no information on  $\sigma$ and $\sdiese$ is available. Moreover, noise levels might not be homoscedastic. 
In Fig.~\ref{fig:real_data_k}, we present the histogram of the estimated value $\widehat{k}$ and observe that the procedure described in \Cref{ssec:joint} provides a reasonably good estimate of $k^*=60$.

Recall that at each iteration of estimating $k^*$ we solve an MCF problem, which can be computationally costly given the sample sizes are large. However, there are several aspects where we can speed up this procedure. First, using the Greedy algorithm to match the feature vectors (as done in OpenCV) will allow us to compute only one distance per iteration instead of solving MCF from scratch. Second, the stepsize from the second step of the procedure from \Cref{ssec:joint} can be increased by considering the difference $\hat{\Phi}(k+10) - \hat{\Phi}(k)$, then with a proper adjustment to the threshold, we will obtain a $10$ times speedup in the number of iterations.

\begin{figure}
\centering
\includegraphics[width=0.47\textwidth]{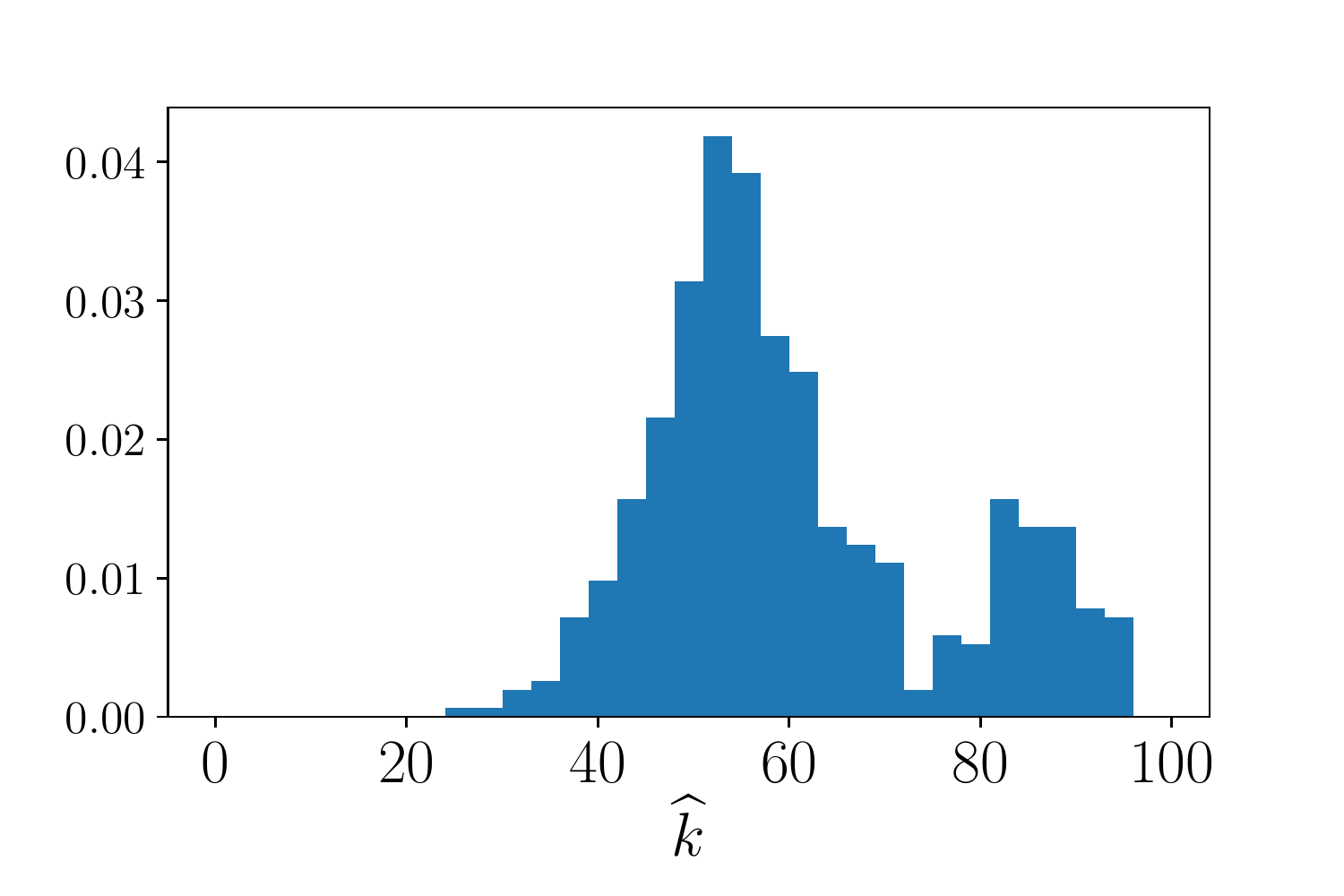}
\vspace{-10pt}
\captionof{figure}{The histogram of estimates values $\hat k$ computed on $1000$ distinct image pairs. In each image, a set of keypoints of sizes $n=m=100$ were chosen, with only $k^*=60$ inlier keypoints. Each keypoint is represented by its $128$-dimensional SIFT descriptor.}
  \label{fig:real_data_k}
\end{figure}

\subsection{Experiments on Biomedical Data}
\label{ssec:bio_data}
First, we tested our estimator $\widehat{\pi}_{k}^{\textup{LSS}}$ 
from \eqref{eq:lss-k} in the setting of \citep{chen2022one}[Sec. 5.1]. The setting considered there is to recover the matching between two datasets\footnote{\texttt{SeuratData} R package \citep{seurat}.} 
collected from human pancreatic islets using technologies (CEL-seq2 \citep{hashimshony2016cel} 
and Smart-seq2 \citep{picelli2013smart}). 
CEL-seq2 data contain measurements on 34363 RNAs in 2285 cells, 
and Smart-seq2 data contain measurements on 34363 RNAs in 
2394 cells. After applying standard pre-processing procedures 
using Python package \texttt{scanpy} \citep{wolf2018scanpy}, 
we select $5000$ most active RNAs for each dataset. $2808$ distinct 
RNAs appeared in both datasets' top-$5000$, so we leave out the 
rest obtaining two datasets of sizes $2808\times 2285$ and $2808\times 2394$. 
Each cell in both datasets has a human-annotated type (out 
of 13 cell types). We randomly downsample cells to get an equal number of cells-per-type in both datasets, eventually getting two datasets of size $2808 \times 1935$. We  match cells in two datasets using $\widehat{\pi}_{k}^{\textup{LSS}}$ from \eqref{eq:lss-k}. However, we have no information on the ground truth matching, therefore as suggested in \cite{chen2022one} the accuracy is calculated on the cell-type level. This means that a single match is considered correct if it matches two cells of the same type. Our matching estimator achieves $97.88\%$ cell-type level accuracy, which is almost the same as the result 
of $97.93\%$ of \citep{chen2022one} without any dimensionality 
reduction and using a simpler approach. The resulting 
confusion matrix is shown in Fig.~\ref{fig:celseq_smartseq_cm}.

\begin{figure}
\centering
\includegraphics[width=0.408\textwidth]{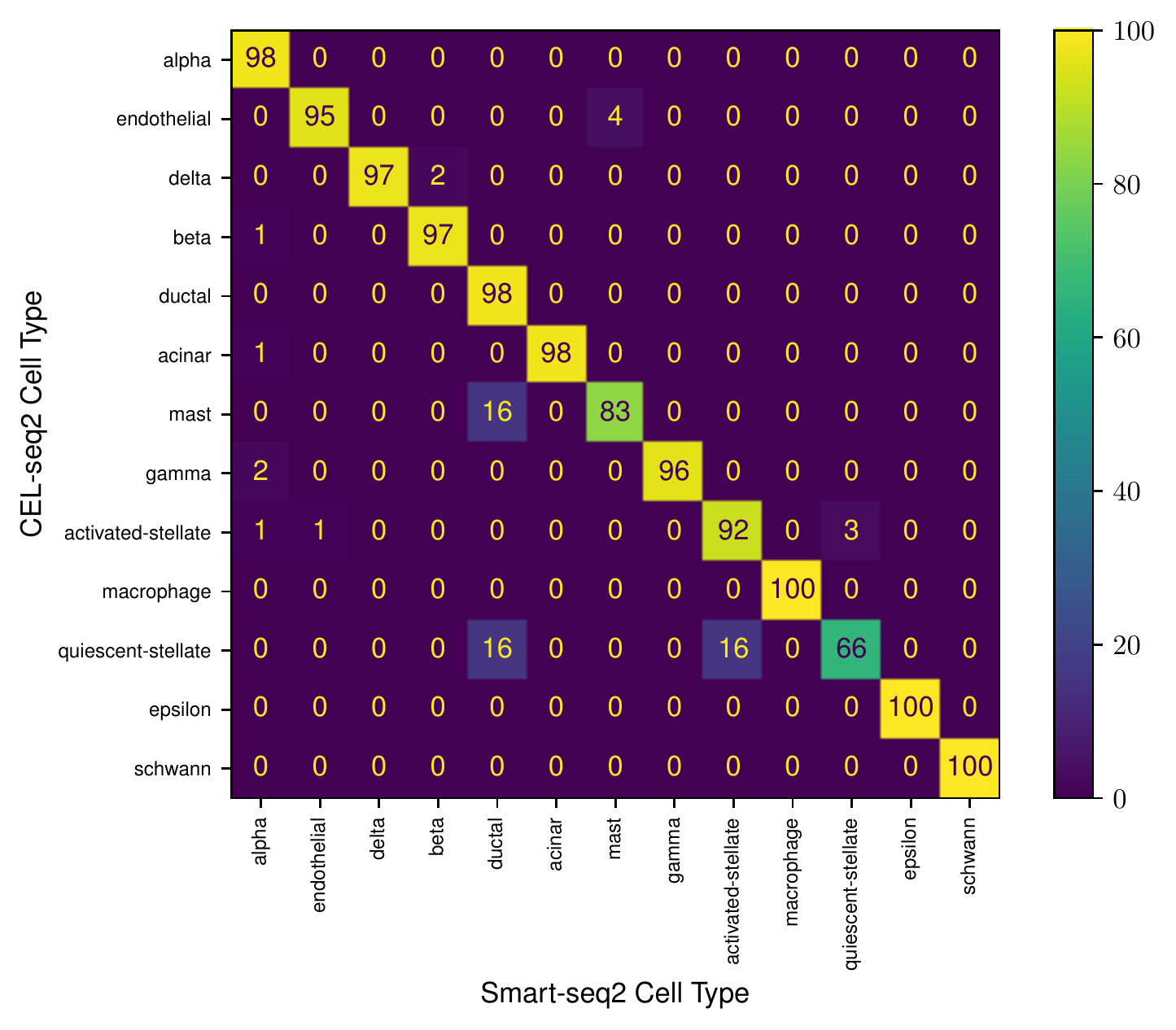}
\captionof{figure}{Cell-type level confusion matrix of $\widehat{\pi}_{n}^{\textup{LSS}}$ on Cel-seq2 and Smart-seq2 datasets of the same size $n=1935$.}
  \label{fig:celseq_smartseq_cm}
  \vspace{-15pt}
\end{figure}

To demonstrate the result of \Cref{thm:main}, we design 
the following experiment. After performing the same pre-processing 
and balancing steps from the previous experiment, we proceed to 
remove cells of one fixed type from the CEL-seq2 dataset and cells of 
another fixed type from the Smart-seq2 dataset. This way, we can 
ensure the presence of outliers in both sets. Number of cells-per-type 
are shown in Fig.~\ref{fig:celseq_smartseq_celltype_hist}. We only 
experiment with types \textit{beta, gamma, delta, ductal}, and 
\textit{acinar} because cells of type \textit{alpha} constitute 
almost half of the whole dataset and other types have very few cells for the results to be significant. Then we fix a pair of different cell types, remove each from the corresponding dataset, and estimate the real number of inliers (here meaning the number of cells of a type that appears in both datasets) using the algorithm described in \Cref{sec:theoreticalResults}. The results are reported in \Cref{tab:joint_est}. We observe that in most cases the estimated value $\widehat{k}$ is slightly underestimated but is an extremely accurate estimate of the true matching size $k^*$. 

\begin{figure}[ht]
\centering
\includegraphics[width=0.43\textwidth]{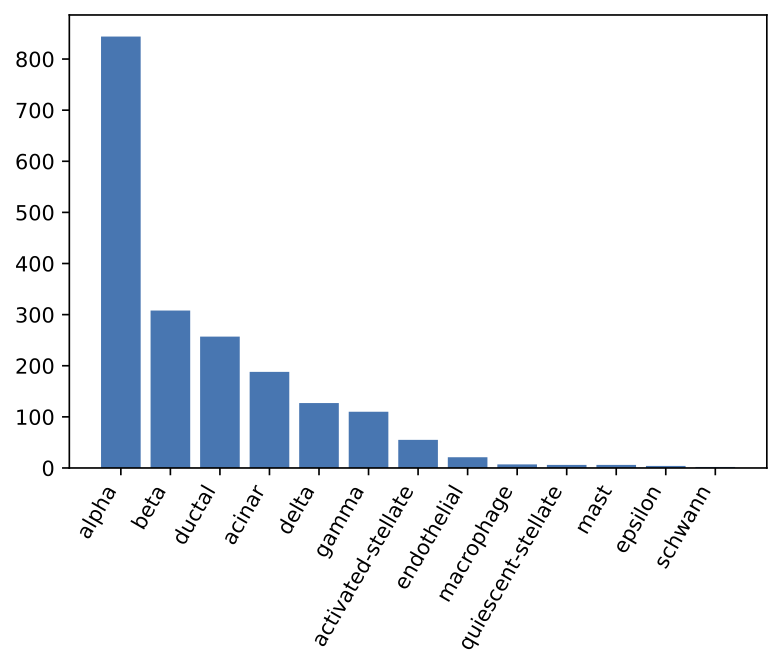}
\vspace{-5pt}
\captionof{figure}{Cell-type frequencies after type-balancing Cel-seq2 and Smart-seq2 datasets of size $n=1935$ with 13 cell-types.}
  \label{fig:celseq_smartseq_celltype_hist}
  \vspace{-5pt}
\end{figure}

\begin{table}[ht]
\scalebox{0.84}{
\begin{tabular}{|l|l|l|l|l|l|
}
\hline
       & $\beta$ & $\gamma$ & $\delta$ & duct. & aci. 
       \\ 
\hline
$\beta$   & ~  &\nicefrac{1517~\, }{~\, 1593} &\nicefrac{1500~\, }{~\, 1592}&\nicefrac{1370~\, }{~\, 1362}&\nicefrac{1439~\, }{~\, 1421}
\\ \hline
$\gamma$  & \nicefrac{1517~\, }{~\, 1594} & ~   &\nicefrac{1698~\, }{~\, 1777}&\nicefrac{1568~\, }{~\, 1557} & \nicefrac{1637~\, }{~\, 1618}
\\ \hline
$\delta$  & \nicefrac{1500~\, }{~\, 1594} &\nicefrac{1698~\, }{~\, 1772}& ~ & \nicefrac{1551~\, }{~\, 1540} & \nicefrac{1620~\, }{~\, 1602}
\\ \hline
duct. & \nicefrac{1370~\, }{~\, 1362} &\nicefrac{1568~\, }{~\, 1559}&\nicefrac{1551~\, }{~\, 1544} & ~   & \nicefrac{1490~\, }{~\, 1557}
\\ \hline
aci.  & \nicefrac{1439~\, }{~\, 1434} &\nicefrac{1637~\, }{~\, 1631}&\nicefrac{1620~\, }{~\, 1614}    & \nicefrac{1490~\, }{~\, 1592} & ~    
\\ \hline
\end{tabular}
}
\caption{
Each row corresponds to the experiment where the particular type has been removed from the CEL-seq2 dataset and 
each column---to the type removed 
from Smart-seq2. Each cell has two numbers;  
the true number of inliers / its estimate by the 
algorithm described in \Cref{sec:theoreticalResults}.}
\label{tab:joint_est}
\end{table}
\section{CONCLUSION AND DISCUSSION}
\label{sec:discussion}
We have analyzed the problem of matching map recovery between two sets of feature-vectors, when the number $k^*$ of true matches is unknown. We focused on two practically relevant settings of this problem. Assuming a lower bound $k$ on $k^*$ is available, we proved---under the weakest possible condition on the signal-to-noise ratio---that the $k$-LSS procedure makes no mistake with high probability. More precisely, 
$k$-LSS provides an estimated map the support of which is 
included in the support of the true matching map and 
the values of these two maps coincide on this subset. 
More importantly, we proposed a procedure for estimating 
the unknown matching size $k^*$ and proved that it finds 
the correct value of $k^*$ and the true matching map 
$\pi^*$ with high probability. Once again, this holds 
under the minimal assumption that the signal-to-noise 
ratio exceeds the minimax separation rate. 

Interestingly, our results demonstrate that the minimax 
rate of separation does not depend on $k ^*$ and, more 
surprisingly, that the absence of the knowledge of $k^*$ 
has no impact on the minimax rate. These rates are attained
by computationally tractable algorithms solving the minimum cost flow problem. Our results are limited to Gaussian noise and to noise 
levels that are equal across observations. Furthermore, 
we only tackled the recovery problem, leaving the problem
of estimation to future work. 
\subsubsection*{Acknowledgements}
This work was supported by the grant Investissements d’Avenir (ANR-11-IDEX0003/Labex Ecodec/ANR-11-LABX-0047), the ADVANCE Research Grant provided by the Foundation for Armenian Science and Technology, and the Yerevan State University.


\bibliography{permutation_ref}

\onecolumn

\appendix

\part{Appendix} 
\parttoc 

The purpose of this appendix is twofold: to present the proofs of the lemmas used in the main paper and to provide additional experimental evidence showing that it is indeed possible to obtain an accurate estimator for unknown $k^*$ even when the noise levels are unknown and potentially heterogeneous. The reproducible code of all the experiments can be found in the supplementary material. 

\section{Proofs of Lemmas from \texorpdfstring{\Cref{sec:main_proof}}{}}\label{app-A}
We start by presenting the definitions that we use in this supplementary material. Recall the definitions of the test statistics $\hat{\Phi}(\cdot)$ and its normalized version $\hat{L}(\cdot)$, which depends on the quantity $\sigma_0^2$
\begin{align}
    \hat \Phi(k) = 
     \min_{\pi \in \mathcal{P}_k} 
     \sum_{i \in S_{\pi}}
     {\| X_i - \Xdiese_{\pi(i)} \|^2_2}, \qquad 
     \hat L(k) = \min_{\pi \in \mathcal{P}_k}
     \sum_{i \in S_{\pi}} 
     \frac{\| X_i - \Xdiese_{\pi(i)} \|^2_2}
     {\sigma^2 + \sigmadiese{}^2} \equiv \frac{\hat \Phi(k)}{\sigma_0^2}.
\end{align}
For completeness, we also recall the definition of the standard Gaussian random vectors $\eta_{ij}$
\begin{align}
    \eta_{ij} = \frac{\sigma\xi_i - \sigmadiese\xidiese_j}{\sqrt{\sigma^2+{\sigmadiese}{}^2}}.
\end{align}

The quantities associated with $\eta_{ij}$ which will be used in the proofs are $\zeta_1$ and $\zeta_2$, which are defined as follows
\begin{align}
    \zeta_1 \triangleq \max_{i \neq j} \frac{|(\theta_i - \tdiese_j)^\top\eta_{ij}|}{\| \theta_i - \tdiese_j\|_2}, \qquad \zeta_2 \triangleq d^{-1/2} \max_{i, j} \big|\|\eta_{ij}\|^2_2 - d \big|.
\end{align}
Recall also that for any matching map $\pi$ we define $L(\pi)$ as follows
\begin{align}
    L(\pi) = \sum_{i \in S_{\pi}}\nolimits \frac{\| 
    X_i - \Xdiese_{\pi(i)}\|_2^2}{\sigma^2 + \sdiese{}^2}.
\end{align}

In this section, we present the proofs of lemmas used in \Cref{sec:theoreticalResults} for proving \Cref{theorem_2} and \Cref{thm:3}. For the reader's convenience, we include the statements of the lemmas as well.

\lemmauno*
\begin{proof}[Proof of \Cref{lem:1}] 
Let us recall the definition of the individual signal-to-noise ratios $
\k_{i,j} \triangleq \frac{\| \theta_i - \tdiese_j\|_2}{\sqrt{\sigma^2+{\sigmadiese}{}^2}}$. 
For any matching map $\pi$ and 
for any $i\in S_\pi$, we have
\begin{align}
     \frac{\| X_i - \Xdiese_{\pi(i)} \|^2_2}
     {\sigma^2+{\sigmadiese}{}^2}& = \frac{\| \theta_i - \tdiese_{\pi(i)}\|^2_2}{\sigma^2+{\sigmadiese}{}^2} + 2 \frac{(\theta_i - \tdiese_{\pi(i)})^\top\eta_{i,\pi(i)}}{\sqrt{\sigma^2+{\sigmadiese}{}^2}} +  \|\eta_{i,\pi(i)}\|^2_2 \\
    & \geq  \frac{\| \theta_i - \tdiese_{\pi(i)}\|^2_2}{\sigma^2+{\sigmadiese}{}^2} - 
    2 \zeta_1 \frac{\| \theta_i - \tdiese_{\pi(i)}\|_2}{\sqrt{\sigma^2+{\sigmadiese}{}^2}} + \|\eta_{i,\pi(i)}\|^2_2\\
    & =  \k_{i, \pi(i)}^2 - 2 \zeta_1 \k_{i, \pi(i)} + \|\eta_{i,\pi(i)} \|_2^2. \label{eq:5}
\end{align} 
Note that if $i\in S_\pi$ is such that $\pi^*(i) = \pi(i)$ (correct matching), then $\k_{i, \pi(i)} = 0$. For all the other $i\in S_\pi$, we have
$\k_{i, \pi(i)} \ge \kall$. Therefore, denoting 
$S_\pi^+ = \{i\in S_\pi\cap S_{\pi^*} : \pi(i) =
\pi^*(i)\}$ and $S_\pi^- = S_\pi\setminus S_\pi^+$, 
Eq.~\eqref{eq:5} implies that on the event 
$\{ \kall \ge \zeta_1\}\supset \O_0$, 
we have
\begin{align}
    L(\pi) 
    & \geq  \sum_{i \in S_{\pi}^-} \big( \k_{i, \pi(i)}^2 - 2 \zeta_1 \k_{i, \pi(i)}\big) + \sum_{i \in S_{\pi}}\|\eta_{i,\pi(i)} \|_2^2   \\
    & \geq |S_\pi^-| \big( \kall^2 - 2 \zeta_1 \kall \big) + \sum_{i \in S_{\pi}} \|\eta_{i,\pi(i)} \|_2^2 . 
\end{align} 
Let us choose any $S_0$ such that 
$S_\pi^+\subset S_0\subset S^*$ and $|S_0|\le |S_\pi|$. We define $\pi_0$ as the restriction 
of $\pi^*$ on $S_0$.  
If, in addition, we set $S_0^- = S_0 
\setminus S_{\pi}^+$, we can infer from the last
display that 
\begin{align}
    L(\pi) - L (\pi_0) &\ge  |S_\pi^-| \big( \kall^2 - 2 \zeta_1 \kall \big) + \sum_{i \in S_{\pi}^-} \|\eta_{i,\pi(i)} \|_2^2
    -\sum_{i \in S_0^-} \|\eta_{i,\pi^*(i)} \|_2^2\\
    &\ge |S_\pi^-| \big( \kall^2 - 2 \zeta_1 \kall
    \big) + d(|S_\pi^-| - 
    |S_0^-|) - \sqrt{d}\,\zeta_2(|S_\pi^-| + 
    |S_0^-|)\\
    &\ge |S_\pi^-| \big( \kall^2 - 2 \zeta_1 \kall - 2 \sqrt{d}\,\zeta_2\big) + d(|S_\pi^-| - 
    |S_0^-|)\\
    &=|S_\pi^-| \big( \kall^2 - 2 \zeta_1 \kall - 2 \sqrt{d}\,\zeta_2\big) + d(|S_\pi| - 
    |S_0|). \label{eq:6}
\end{align}
On the event $\Omega_0$, we have $\kall^2 - 2 \zeta_1 \kall - 2 \sqrt{d}\,\zeta_2\ge 
\kall^2/4$. Moreover, since $\pi\neq \pi^*_{S_\pi}$, we have $|S_\pi^-|\ge 1$. These two inequalities 
combined with \eqref{eq:6} complete the proof of
the lemma.
\end{proof}

\lemmados*

\begin{proof}[Proof of \Cref{lem:2}]
The union bound implies that 
\begin{align}
    \prob(\O_{0, x}^\complement)
        &\le \prob\big(8\z_1\ge x\big)
            +\prob(4\sqrt{d}\,\z_2 \ge x^2)\\
        &= \prob\big(\z_1\ge {\textstyle\frac1{8}}\, {x}\big)+
            \prob\bigg(\z_2\ge{\textstyle \frac{1}{4\sqrt{d}}}\,x^2\bigg).
            \label{ineq:zeta1zeta2}
\end{align}
Notice that $\zeta_1$ can be represented as the maximum of absolute values of standard Gaussian random variables, \textit{i.e.}, $\zeta_1 = \max_{i \neq j} |\zeta_{i, j}|$. Applying the well-known Gaussian tail bounds together with the union bound
yields
\begin{align}
    \prob\big(\z_1\ge {\textstyle\frac1{8}}\, {x}\big) \le \sum_{i \neq j} \prob\big(|\zeta_{i, j}| \ge {\textstyle\frac1{8}}\, {x} \big) \le 2n^2 \exp\big(-x^2/128\big). 
    \label{ineq:zeta1_}
\end{align}

To bound the second term of \eqref{ineq:zeta1zeta2}, we use Lemma 1 from \cite{galstyan2021optimal} which bounds the tails of a random variable $\zeta_2$. Thus, combining it with a union bound we arrive at the following inequality
\begin{align}
    \prob\bigg(\z_2\ge \frac{x^2}{4\sqrt{d}}\bigg) &\le 2n^2 \exp\bigg\{ - \frac{x^2}{32\sqrt{d}} \bigg(\frac{x^2}{4\sqrt{d}} \wedge \sqrt{d}\bigg) \bigg\} \\
    &= 2n^2 \exp\bigg\{ - \frac{(x/16)^2}{d}\big(2x^2 \wedge 8d\big) \bigg\}. 
    \label{ineq:zeta2_}
\end{align}
Then, plugging the bounds obtained in \eqref{ineq:zeta1_} and \eqref{ineq:zeta2_} into \eqref{ineq:zeta1zeta2} concludes the proof of the lemma.
\end{proof}

\lemmatres*

\begin{proof}[Proof of \Cref{lem:5}]
This claim is a consequence of \Cref{lem:1}. 
We have already seen in the proof of \Cref{theorem_2} that $\hat\pi_{k^*} = \pi^*$ 
on $\O_0$. Therefore, 
\begin{align}
    \hat L(k^*+1) - \hat L(k^*) = 
    L(\hat\pi_{k^*+1}) - L(\pi^*). 
\end{align}
If we apply \Cref{lem:1} to $\pi = \hat\pi_{k^*+1}$, it is clear that we can choose as $\pi_0$ the true 
matching map $\pi^*$. The claim of \Cref{lem:1} then
yields
\begin{align}
    L(\hat\pi_{k^*+1}) - L(\pi^*) \ge \frac14\kall^2
    + d(k^*+1 - k^*) = \frac14\kall^2
    + d
\end{align}
and the claim of the lemma follows.
\end{proof}

\lemmaquatro*

\begin{proof}[Proof of \Cref{lem:6}] 
Let $\hat\pi_k$ be a matching map from 
$\mathcal P_k$ minimizing $L(\cdot)$, \textit{i.e.}, such that $L(\hat\pi_k) = \hat L(k)$. According to
\Cref{lem:1}, we have $\hat\pi_k(i) = \pi^*(i)$ 
for every $i\in \hat S_k \triangleq S_{\hat\pi_k}$. One easily checks that there exists a set 
$\hat S_{k+1}\subset S^*$ of cardinality $k+1$ 
such that $\hat S_k\subset \hat S_{k+1}$ and 
$\hat L(k+1) = L(\hat\pi_{k+1})$ where 
$\hat\pi_{k+1}$ is the restriction of $\pi^*$ 
to $\hat S_{k+1}$. Indeed, if $\pi$ is
any element of $\mathcal P_{k+1}$ minimizing $L(\cdot)$, we know that it is defined as a restriction of $\pi^*$ on some set $S$ of cardinality $k+1$. If we replace arbitrary $k$ 
elements of $S$ by those of $\hat S_k$, and 
modify $\pi$ accordingly, then we will get
a new mapping from $\mathcal P_{k+1}$, for
which the value of $L(\cdot)$ is less than
or equal to $L(\pi)$. Therefore, we have found
a mapping map that minimizes $L(\cdot)$ over
$\mathcal P_{k+1}$ and has a support that is
obtained by adding one point to $\hat S_k$. 
This implies that
\begin{align}
    \hat L(k+1) - \hat L(k)  = 
    L(\hat\pi_{k+1}) - L(\hat\pi_k)
    & = \sum_{i\in\hat S_{k+1}} \|\eta_{i,\pi^*(i)}\|^2_2 - 
    \sum_{i\in\hat S_{k}} \|\eta_{i,\pi^*(i)}\|^2_2\\
    & = \sum_{i\in\hat S_{k+1}\setminus \hat S_k} \|\eta_{i,\pi^*(i)}\|^2_2 \le d + \sqrt{d}\,\zeta_2.
\end{align}
This completes the proof of the lemma.
\end{proof}

\section{Additional experiments on real data}\label{app-B}
In this section, we perform experiments on a pair of images and respectively choose keypoints on each of them to showcase the behavior of the proposed procedure. In Fig.~\ref{fig:real_data_k} of the main manuscript, we show the histogram of the choice of matching size $k^*$ for $1000$ distinct image pairs from ``Reichstag'' scene of IMC-PT 2020 dataset \citep{Jin2020}. In this dataset, we only have the {\it{pseudo}} ground truths, and sometimes these ground truths are incorrect (different points are matched), which makes the results unreliable. Therefore, to have a more controlled experiment we take one image of Sacr\'{e} Coeur of Paris and crop it in half on each axis. Then, we add noise into the cropped image by interpolating the pixels such that both images have the same resolution. This procedure is in line with the studied model, presented in \eqref{model}. Afterward, we detect and compute SIFT descriptors of $m = 2n - k^*$ keypoints from the cropped image and translate them into the original image. Then, we fix $k^*$ {\it{inlier}} keypoints in both images and add $n-k^*$ distinct points to each image, which will be considered as \textit{outliers}.

\begin{figure}[!ht]
    \centering
\includegraphics[scale=0.35]{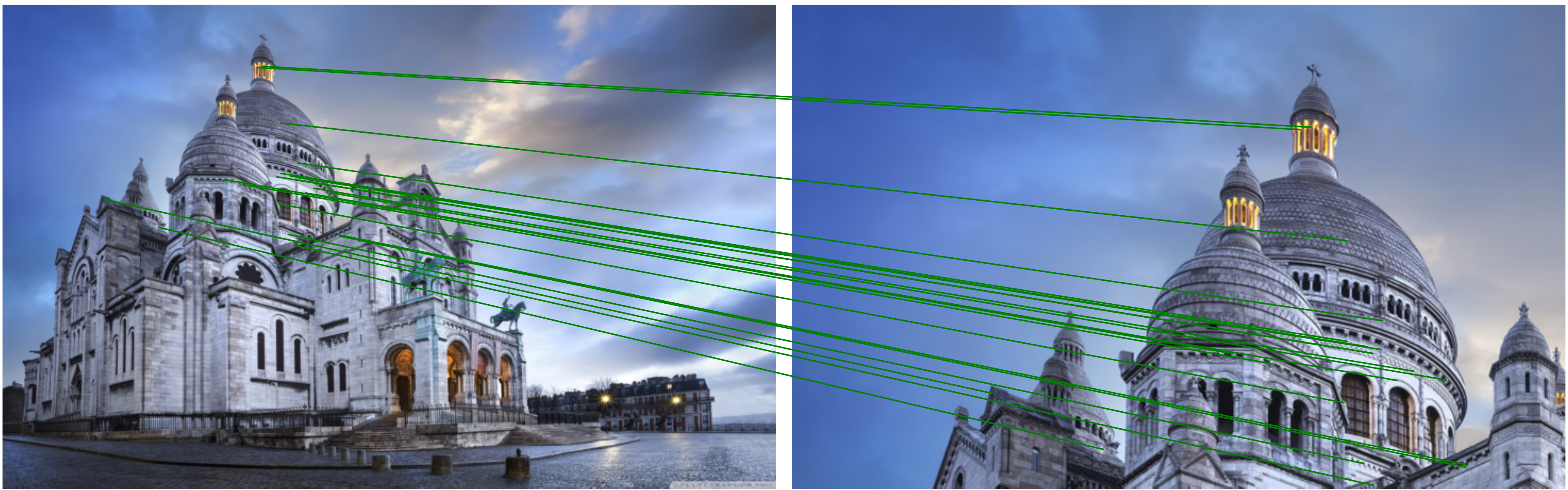}
\includegraphics[scale=0.35]{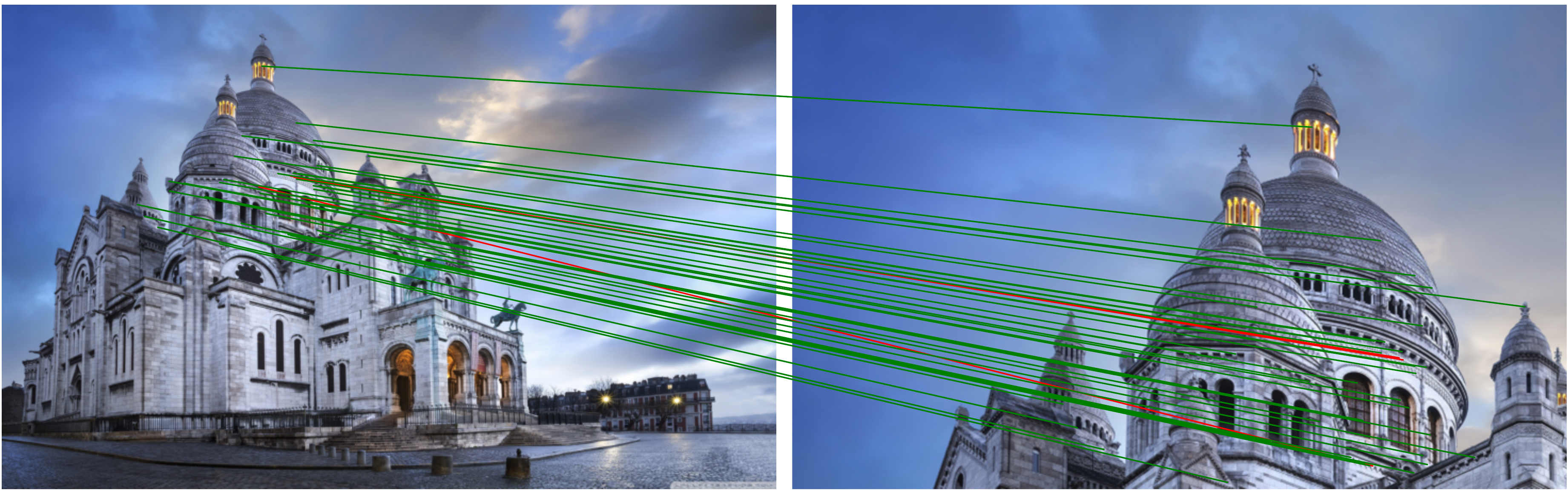}
\includegraphics[scale=0.35]{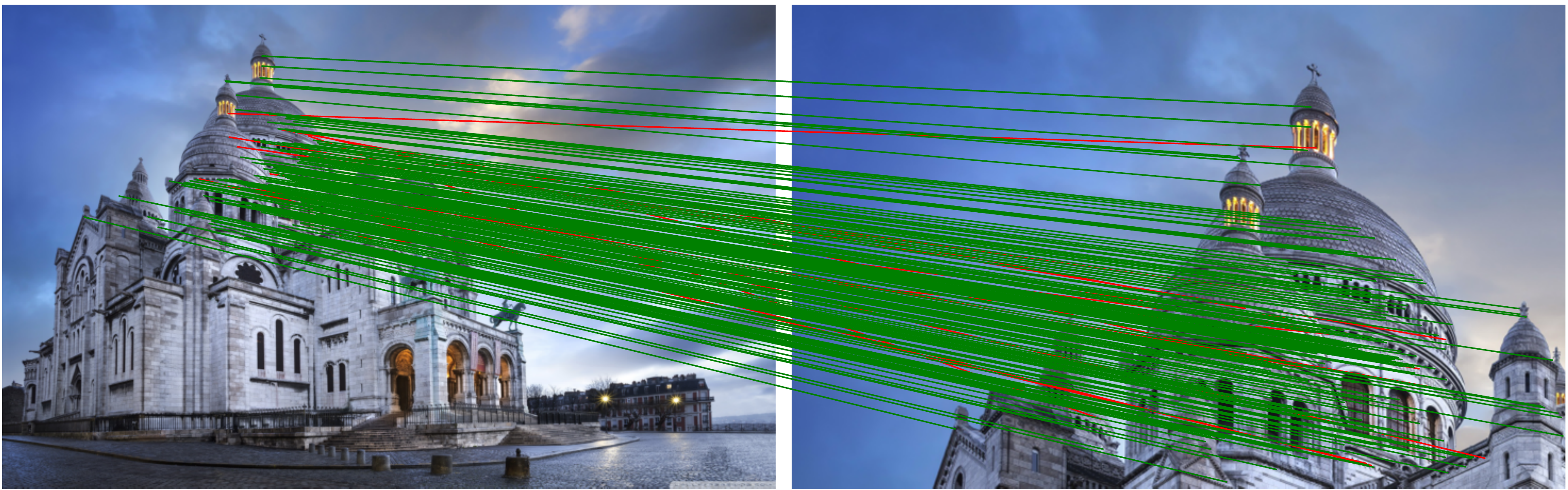}
\caption{
    We fixed the tolerance level $\alpha = 0.001$. For illustration purposes, we have selected 3 different scenarios to demonstrate the quality of the model selection and the matching recovery. In the uppermost plot $n=50$ and $k^*=25$. The procedure from \Cref{sec:theoreticalResults} outputs $\hat{k} = 22$ and $\hat{\pi}_{\hat k}^{\textup{LSS}} = \pi^*$ (perfect matching). In the middle plot $n=100$ and $k^*=50$, the estimated value of $k^*$ is $\hat{k} = 44$. In the bottom plot we selected $n=350$ keypoints from which $k^*=250$ were inliers ($\hat{k} = 213$). In last two cases matching map contained only few mistakes.
}
\label{fig:scoeur}
\end{figure}

We then run our procedure for the estimation of the matching size $k^*$ and the recovery of the matching map $\pi^*$ with the tuning parameters chosen as shown in \Cref{thm:main}. The results for different values of $n$ and $k^*$ are summarized in the figure below. The estimated value $\hat{k}$ is close to $k^*$ and is slightly underestimated in all cases. Slight underestimation is not a problem, whereas slight overestimation would surely cause more incorrect matching pairs.

For all the plots, we see that the value of $k^*$ is estimated accurately and is slightly underestimated (as shown also in Fig.~\ref{fig:real_data_k}). The accuracy of the estimation of $\pi^*$ (number of green lines) is also very high with only a few mistakes. It is worth mentioning that the estimation of $k^*$ is a procedure that can be of interest by itself because after having an accurate estimator for $k^*$ one is free to apply any matching algorithm to circumvent other purposes. For example, one can use fast approximate methods to accelerate matching algorithms (see e.g. \cite{malkov2020, harwood2016, jiang2016}). Another possible direction is to consider wider or narrower classes of mappings, e.g. 1-to-many matching maps.

\end{document}